\newtheorem{thm}{Theorem}[section]
\newtheorem{cor}[thm]{Corollary}
\newtheorem{lem}[thm]{Lemma}
\newtheorem{prop}[thm]{Proposition}
\theoremstyle{definition}
\newtheorem{rem}[thm]{Remark}
\newtheorem*{ack}{Acknowledgements}
\numberwithin{equation}{section}
\newcommand{\RR}{\mathbb{R}}
\newcommand{\HH}{\mathbb{H}}
\newcommand{\EE}{\mathbb{E}}
\newcommand{\Z}{\mathbb{Z}}
\newcommand{\E}{\mathbb{E}}
\newcommand{\nil}{\mathrm{Nil}}
\newcommand{\sol}{\mathrm{Sol}}
\newcommand{\SL}{\widetilde{\mathrm{SL}_2(\mathbb{R})}}
\newcommand{\action}{\curvearrowright}
\newcommand{\thin}{\mathrm{thin}}
\DeclareMathOperator{\curv}{curv}
\DeclareMathOperator{\Susp}{Susp}
\newcommand{\RP}{\mathbb{R}P}
\newcommand{\torus}{\mathrm{\mathbb{T}}}
\newcommand{\sphere}{\mathrm{\mathbb{S}}}
\newcommand{\pdosr}{\RP^2}
\newcommand{\tildeX}{\widetilde{X}}
\begin{document}



\title[On three-dimensional Alexandrov spaces]{On three-dimensional Alexandrov spaces}


\author[F. Galaz-Garcia]{Fernando Galaz-Garcia$^*$}
\address[Galaz-Garcia]{Mathematisches Institut, WWU  M\"unster, Germany.}
\email{f.galaz-garcia@uni-muenster.de}
\thanks{$^{*}$The author is part of  SFB 878: \emph{Groups, Geometry \& Actions}, at the University of M\"unster.}


\author[L. Guijarro]{Luis Guijarro$^{**}$}
\address[Guijarro]{ Department of Mathematics, Universidad Aut\'onoma de Madrid, and ICMAT CSIC-UAM-UCM-UC3M, Spain.}
\curraddr{}
\email{luis.guijarro@uam.es}
\thanks{$^{**}$ Supported by research grants MTM2008-02676,  MTM2011-22612 from the Ministerio de Ciencia e Innovaci\'on (MCINN) and MINECO: ICMAT Severo Ochoa project SEV-2011-0087}

\date{\today}


\subjclass[2000]{Primary: 53C23; Secondary: 53C20, 57N10}
\keywords{Alexandrov space, positive curvature, nonnegative curvature, Poincar\'e conjecture}


\begin{abstract}
We study three-dimensional Alexandrov spaces with a lower curvature bound, focusing on extending three classical results on three-dimensional manifolds: First, we show that a closed three-dimensional  Alexandrov space of positive curvature, with at least one topological singularity, must be homeomorphic to the suspension of $\RP^2$;  we use this to classify, up to homeomorphism, closed, positively curved Alexandrov spaces of dimension three. Second, we classify closed three-dimensional Alexandrov spaces of nonnegative curvature. Third, we study the  well-known Poincar\'e Conjecture in dimension three, in the context of Alexandrov spaces, in the two forms it is usually formulated for manifolds. We first show that the only closed three-dimensional Alexandrov space that is also a homotopy sphere is the $3$-sphere; then we give examples of closed, geometric, simply connected three-dimensional Alexandrov spaces for five of the eight Thurston geometries, proving along the way the impossibility of getting such examples for the $\nil$, $\SL$ and $\sol$ geometries. We conclude the paper by proving the analogue of the geometrization conjecture for closed three-dimensional Alexandrov spaces. 
\end{abstract}
\maketitle




\section{Introduction and results}
Alexandrov spaces (with a lower curvature bound) are a natural extension of  Riemannian manifolds and appear when looking at limits of the latter under the Gromov-Hausdorff distance or when taking quotients of Riemannian manifolds by isometric group actions. They provide the correct setting where to study many of the questions of global Riemannian geometry, and thus a lot of the efforts since their introduction have been directed towards extending to Alexandrov spaces what is known for Riemannian manifolds. This paper aims to do this for three-dimensional Alexandrov spaces. Three-dimensional manifolds have been extensively studied, and it seems reasonable to apply the considerable knowledge on such manifolds in the broader context of Alexandrov geometry.  

To facilitate the reading of this paper, we have included the basic results on Alexandrov geometry that a general reader needs to know at the end of this introduction; 
this should by no means serve as a substitute for the  usual references ~\cite{BBI,BGP}. A good reference for the basic results of $3$-manifold topology is Hempel's book ~\cite{He}.


The motivation for our first result is the classification (up to diffeomorphism) of closed Riemannian $3$-manifolds with positive sectional curvature, given by Hamilton in \cite{Ha}. We obtain the corresponding statement  for Alexandrov spaces; the lack of differentiable structures in the Alexandrov setting means that we will get the classification up to homeomorphism.


\begin{thm}[Three-dimensional Alexandrov spaces of positive curvature]
\label{PC}
Let $X^3$ be a closed, positively curved three-dimensional Alexandrov space. If $X^3$ has a point with space of directions homeomorphic to $\RP^2$, then $X^3$ is homeomorphic to $\Susp(\RP^2)$, the suspension of $\RP^2$.
\end{thm}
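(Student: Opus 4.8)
The plan is to combine the local conical structure of three-dimensional Alexandrov spaces with critical point theory for the distance function from the given topologically singular point, positive curvature being used to pin down the critical set.

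\emph{Local structure.} At a point of a three-dimensional Alexandrov space the space of directions is a closed two-dimensional Alexandrov space with $\curv\geq 1$, hence homeomorphic to $S^2$ or to $\RP^2$; by Perelman's conical neighborhood theorem a point $p$ of the latter type has a neighborhood homeomorphic to the open cone $C(\RP^2)$, and in particular is isolated, so $X^3$ has only finitely many of them. Fix such a $p$ and set $f=\dist(p,\cdot)\colon X^3\to[0,\infty)$, choosing $q$ with $f(q)=\max f=:R>0$; as a maximum point, $q$ is a critical point of $f$.

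\emph{Reduction.} I claim the essential statement to prove is that $f$ has \emph{no critical point in} $X^3\setminus\{p,q\}$ --- equivalently, that the topology of the metric balls $B(p,t)$ does not change as $t$ runs from a small conical value up to $R$. Granting this, standard critical point theory for distance functions on Alexandrov spaces shows that $f$ restricts to a (trivial) fiber bundle over $(0,R)$ with fiber a metric sphere $S(p,r)$; for small $r>0$ the conical neighborhood of $p$ identifies $S(p,r)$ with $\Sigma_pX\cong\RP^2$, so $f^{-1}((0,R))\cong\RP^2\times(0,R)$. A deleted neighborhood of $q$ is therefore homeomorphic to $\RP^2\times(0,\eps)$; comparing with the conical neighborhood $C(\Sigma_qX)$ of $q$ forces $\Sigma_qX\cong\RP^2$ (were it $S^2$, a deleted neighborhood of $q$ would be $\cong S^2\times\RR$, which is simply connected, unlike $\RP^2\times\RR$). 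Finally, since $p$ and $q$ carry conical neighborhoods over $\RP^2$ sitting compatibly at the two ends of the product $\RP^2\times(0,R)$, the space $X^3$ is recovered from $\RP^2\times[0,R]$ by collapsing each boundary copy of $\RP^2$ to a point, i.e. $X^3\cong\Susp(\RP^2)$.

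\emph{The key estimate, and the main difficulty.} It remains to exclude a critical point $x\in X^3\setminus\{p,q\}$ of $f$, and this is where positive curvature is used; normalize so that $\curv\geq 1$. Criticality of $x$ against the direction of a minimal geodesic from $x$ to $q$ yields a minimal geodesic from $x$ to $p$ with $\angle(\uparrow^p_x,\uparrow^q_x)\leq\pi/2$, and criticality of the maximum $q$ yields $\angle(\uparrow^p_q,\uparrow^x_q)\leq\pi/2$; feeding these hinges into Toponogov's theorem, and using that $q$ realizes the maximum of $\dist(p,\cdot)$, one is to derive a contradiction, in the spirit of the Grove--Shiohama diameter sphere theorem. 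Equivalently, one can first show that $\{p\}$ is an extremal subset --- via a radius bound for curvature-$\geq 1$ metrics on $\RP^2$ --- and then invoke the structure of extremal subsets in positively curved spaces. This estimate is the real content: the point $p$ has no tangent space, only the metric cone over $\RP^2$, so one must work with Perelman's notion of critical point and check that both the Toponogov comparisons and the resulting fibration conclusion remain valid at $p$; moreover there is no cheap shortcut through a diameter hypothesis, since $\Susp(\RP^2)$ does carry positively curved metrics with $\diam>\pi/2$, so the comparison argument must genuinely exploit that $q$ is a maximum point (or use the extremal-subset route). Quoting Perelman's stability theorem for the conical neighborhoods is also needed; once the critical point estimate is in hand, the gluing that reassembles $X^3\cong\Susp(\RP^2)$ out of $\RP^2\times(0,R)$ and the two cones is routine.
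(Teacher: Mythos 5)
Your reduction is where the argument breaks. You reduce the theorem to the claim that $f=\dist(p,\cdot)$ has no critical point in $X^3\setminus\{p,q\}$, and then leave precisely that claim unproved (``one is to derive a contradiction\dots This estimate is the real content''). But the claim, as stated, is not just unproved --- it is false for some metrics satisfying the hypotheses, so no Toponogov/Grove--Shiohama-style argument can deliver it. Take $Y=\sphere^1_a * \sphere^1_b$, the spherical join of circles of lengths $a\le\pi$ and $b<2\pi$ (an Alexandrov space with $\curv\ge 1$, homeomorphic to $\sphere^3$), and let $\iota=\rho * \alpha$, where $\rho$ is a reflection of $\sphere^1_a$ and $\alpha$ is the free half-turn of $\sphere^1_b$. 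Then $\iota$ is an isometric involution with exactly two fixed points $P,Q\in\sphere^1_a$, and $X=Y/\iota$ is a closed Alexandrov space with $\curv\ge 1$ whose two singular points $p,q$ have $\Sigma\cong\RP^2$ (the quotient of the football $\Susp(\sphere^1_b)$ by a free involution). Since $P$ is fixed, $\dist_X(p,\cdot)$ lifts to $\dist_Y(P,\cdot)$. For any $x\in\sphere^1_b\subset Y$ one has $\Sigma_xY=\Susp(\sphere^1_a)$, the direction $v$ of the join arc from $x$ to $P$ lies on the equator, and because $a\le\pi$ every direction of $\Sigma_xY$ makes angle at most $\pi/2$ with $v$; hence every point of $\sphere^1_b$ --- a whole circle at maximal distance $\pi/2$ from $P$ --- is critical, and its image in $X$ is a circle of critical points of $f$ disjoint from $\{p,q\}$. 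In particular the maximum set need not be a single point, the level sets near the top need not be $\RP^2$, and your subsequent steps (the product structure $f^{-1}((0,R))\cong\RP^2\times(0,R)$, the identification $\Sigma_q\cong\RP^2$, and the final gluing) collapse. The alternative you sketch (``$\{p\}$ is extremal, then invoke the structure of extremal subsets'') is a correct observation about $\{p\}$ but is not an argument; nothing you cite produces the suspension structure from it.

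For contrast, the paper avoids critical point theory altogether: it passes to the orientable double branched cover $\pi\colon Y\to X$ over the set $X'$ of $\RP^2$-points, uses the Grove--Wilking result to lift the metric so that $Y$ has $\curv\ge 1$ and hence finite fundamental group, deduces that $\pi_1(X\setminus X')$ is finite (being a $2$-extension of a quotient of $\pi_1(Y\setminus\pi^{-1}(X'))\cong\pi_1(Y)$), and then applies Epstein's theorem on $3$-manifolds with projective-plane boundary together with Perelman's resolution of the Poincar\'e Conjecture to conclude that the manifold part is $\RP^2\times[0,1]$, i.e.\ $X\cong\Susp(\RP^2)$. If you want to salvage a comparison-geometry proof, you would at least have to replace your claim by an analysis of the structure of the set of points at maximal distance from $p$ and of the superlevel sets near it, which is substantially more work than the proposal acknowledges.
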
 

The proof of Theorem~\ref{PC}, along with some corollaries, is contained in Section~\ref{S:PC}. 
 
 
 \begin{rem}
 We have been informed that Theorem~\ref{PC} has also been obtained, independently, in \cite{HaS}, but see also \cite{PeMO}. 
 \end{rem}


Some of the ideas in the proof of the above theorem can be pursued further to provide a complete description of closed, three-dimensional Alexandrov spaces of nonnegative curvature. In the theorem below, we distinguish between the flat and the non-flat cases; in the first case, we obtain a rigidity statement, while in the second case we can only give a topological description. We denote the the nonorientable $\sphere^2$-bundle over $\sphere^1$ by $\sphere^2\tilde{\times}\sphere^1$, and the suspension of $\RP^2$ by $\Susp(\RP^2)$.


\begin{thm}[Three-dimensional Alexandrov spaces of nonnegative curvature]
\label{T:NNC}
Let $X^3$ be a closed, nonnegatively curved three-dimensional Alexandrov space.
\smallskip
\begin{enumerate} 
\item If $X^3$ is a topological manifold, then one of the following holds: 
\medskip
\begin{itemize}
	\item $X^3$ is homeomorphic to a spherical space form, 
	\item $X^3$ is homeomorphic to $\sphere^2\times\sphere^1$, $\RP^2\times\sphere^1$, $\RP^3\#\RP^3$ or $\sphere^2\tilde{\times}\sphere^1$; or \smallskip
	\item $X^3$ is isometric to a closed, flat three-dimensional space form.  
\end{itemize}
\medskip
\item  If $X^3$ has a point with space of directions homeomorphic to $\RP^2$, then either:
\medskip
\begin{itemize}
	\item  $X^3$ is homeomorphic to $\Susp(\RP^2)$,  $\Susp(\RP^2)\#\Susp(\RP^2)$ or\smallskip
	\item $X^3$ is isometric to a quotient of a closed, orientable, flat three- dimensional manifold by an orientation reversing  isometric involution with only isolated fixed points.
\end{itemize}
\end{enumerate}
\end{thm}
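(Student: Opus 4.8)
We first recall the relevant local structure. The space of directions $\Sigma_p$ at a point $p\in X^3$ is a $2$-dimensional Alexandrov space of curvature $\geq 1$, hence homeomorphic to $\sphere^2$ or to $\RP^2$; the set $S\subseteq X^3$ of points with $\Sigma_p\cong\RP^2$ is finite, a neighborhood of each such point is homeomorphic to the open cone over $\RP^2$, and $X^3\setminus S$ is a topological $3$-manifold. Since the open cone over $\RP^2$ is not a manifold at its vertex, part (1) is the case $S=\emptyset$ and part (2) the case $S\neq\emptyset$. The plan is to prove (1) by passing to the universal cover and using the Splitting Theorem, and to reduce (2) to (1) by means of the orientation double cover.

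\emph{Proof of (1).} Let $\widetilde X\to X^3$ be the universal cover, a simply connected, nonnegatively curved Alexandrov $3$-manifold carrying a cocompact isometric action of $\pi_1(X^3)$. If $\pi_1(X^3)$ is finite, then $\widetilde X$ is a closed simply connected $3$-manifold, so $\widetilde X\cong\sphere^3$ and $X^3$, being a closed $3$-manifold with finite fundamental group, is homeomorphic to a spherical space form, by Perelman's geometrization theorem. If $\pi_1(X^3)$ is infinite, then $\widetilde X$ is noncompact and, carrying a cocompact isometric action of an infinite group, contains a line; by the Splitting Theorem $\widetilde X$ is isometric to $\mathbb{R}\times N^2$ for a simply connected, boundaryless, nonnegatively curved Alexandrov surface $N^2$, so $N^2\cong\sphere^2$ or $N^2\cong\mathbb{R}^2$. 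If $N^2\cong\sphere^2$, then $X^3$ is a closed quotient of the $\sphere^2\times\mathbb{R}$ geometry, and these are exactly $\sphere^2\times\sphere^1$, $\sphere^2\tilde{\times}\sphere^1$, $\RP^2\times\sphere^1$ and $\RP^3\#\RP^3$. If $N^2\cong\mathbb{R}^2$, then $N^2$ is flat: if not, $N^2$ carries a nonempty compact $\Iso(N^2)$-invariant subset (built from its curvature measure), so $\Iso(N^2)$ is compact, the $N^2$-component of the $\pi_1(X^3)$-action lies in a compact subgroup $\overline K$, and the quotient of $\mathbb{R}\times N^2$ surjects onto the noncompact space $N^2/\overline K$, contradicting compactness of $X^3$; hence $\widetilde X=\mathbb{R}^3$ is flat, $\pi_1(X^3)$ is a Bieberbach group, and $X^3$ is isometric to a closed flat space form.

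\emph{Proof of (2).} Suppose $S\neq\emptyset$. Near each point of $S$ the manifold $X^3\setminus S$ contains a copy of $\RP^2\times(0,1)$, so $X^3\setminus S$ is nonorientable and has a connected orientation double cover; since the orientation double cover of $\RP^2$ is $\sphere^2$, this extends to a double cover $\widehat X\to X^3$ branched exactly over $S$, whose total space $\widehat X$ is a closed orientable topological $3$-manifold. As $\widehat X$ is the metric completion of the Alexandrov double cover of $X^3\setminus S$, it is nonnegatively curved, so by (1) it is an orientable flat $3$-manifold or one of an orientable spherical space form, $\sphere^2\times\sphere^1$, or $\RP^3\#\RP^3$ (the nonorientable possibilities in (1) being excluded). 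Let $\tau$ be the deck involution of $\widehat X\to X^3$; it is an orientation-reversing isometric involution whose fixed-point set is the finite, nonempty preimage of $S$, and $X^3=\widehat X/\tau$. If $\widehat X$ is flat, this is precisely the second alternative of (2), realized for instance by $\torus^3/(-\mathrm{id})$. If $\widehat X$ is not flat, then $\pi_1(X^3)$ is finite (otherwise the universal cover of $X^3$ would contain a line, split off an $\mathbb{R}$-factor, and hence be a manifold, contradicting $S\neq\emptyset$); one then classifies the orientation-reversing isometric involutions with only isolated fixed points on spherical space forms, on $\sphere^2\times\sphere^1$, and on $\RP^3\#\RP^3$ — bounding the number of fixed points via Smith theory — and comparing with the models $\sphere^3/\mathrm{diag}(1,-1,-1,-1)\cong\Susp(\RP^2)$ and $(\sphere^2\times\sphere^1)/(\text{rotation}\times\text{reflection})\cong\Susp(\RP^2)\#\Susp(\RP^2)$ identifies $X^3$ with $\Susp(\RP^2)$ or $\Susp(\RP^2)\#\Susp(\RP^2)$.

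The main obstacle is this last step in (2): enumerating, up to conjugacy, the orientation-reversing isometric involutions with isolated fixed points on the non-flat model manifolds and recognizing their quotients topologically — in particular excluding connected sums of three or more copies of $\Susp(\RP^2)$. A secondary point that needs care is the assertion that the branched double cover $\widehat X$ is genuinely an Alexandrov space of curvature $\geq 0$ across the branch points; this follows from the fact that the metric completion of an Alexandrov space of curvature $\geq\kappa$ is again Alexandrov of curvature $\geq\kappa$, applied to the Riemannian double cover of $X^3\setminus S$.
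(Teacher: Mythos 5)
Your overall skeleton matches the paper's: split into the manifold case (universal cover plus the Splitting Theorem, giving spherical, $\sphere^2\times\RR$-quotient, or flat) and the singular case (orientable double branched cover, apply part (1) to it, then analyze the deck involution). However, in part (2) you have a genuine gap exactly where you flag "the main obstacle": the classification of orientation-reversing involutions with isolated fixed points on the non-flat covers is asserted, not proved, and Smith theory alone will not deliver it. Concretely, three things are missing. First, when the cover is a spherical space form you would need to rule out, e.g., lens-space covers with exotic involutions; the paper avoids this entirely by re-running the argument of Theorem~\ref{PC}, which only uses finiteness of $\pi_1$ of the cover: finiteness of $\pi_1(X\setminus X')$ plus Epstein's theorem forces $X\setminus X'\cong\RP^2\times[0,1]$, hence $X\cong\Susp(\RP^2)$ (and the cover is $\sphere^3$). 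Second, for $\sphere^2\times\sphere^1$ one needs Tollefson's classification of PL involutions to know there is a \emph{unique} involution with finite fixed-point set and that its quotient is $\Susp(\RP^2)\#\Susp(\RP^2)$; recognizing the quotient is not automatic. Third, and most seriously, $\RP^3\#\RP^3$ must be \emph{excluded} as a branched double cover: the paper does this via Kim--Tollefson's equivariant splitting of involutions on nonprime $3$-manifolds together with Kwun's result that the only orientation-reversing involution on $\RP^3$ has fixed set a $2$-sphere plus a point, so every orientation-reversing involution on $\RP^3\#\RP^3$ has a two-dimensional fixed component. Without these inputs the dichotomy $\Susp(\RP^2)$ or $\Susp(\RP^2)\#\Susp(\RP^2)$ (in particular the exclusion of more than two singular points in the non-flat case) is unproven.

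A secondary weak point: your justification that the branched cover $\widehat X$ is an Alexandrov space of curvature $\geq 0$ ("the metric completion of an Alexandrov space of curvature $\geq\kappa$ is again Alexandrov of curvature $\geq\kappa$") is not a quotable fact in that form; the delicate issue is precisely the lower curvature bound at the branch points, and the paper relies on Grove--Wilking's ramified-cover results (Lemma~\ref{L:BR_COV_GEOM}) for both the curvature bound and the isometry of the deck involution. Your flatness argument for the $\RR^2$-factor in part (1) differs from the paper (which simply applies the Splitting Theorem a second time, using cocompactness); your version can be made to work but as written ("a compact invariant set built from the curvature measure") it is sketchier than the paper's one-line argument.
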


All possible involutions with only isolated fixed points on  closed, orientable, flat three-dimensional space forms and their orbit spaces have been classified in the work of Kwun and Tollefson \cite{KwTo} and Luft and Sjerve \cite{LuSj}.

Theorems \ref{PC} and \ref{T:NNC} settle Conjectures 1.10 and 1.11 in \cite{MiYa}. There is also a smooth classification of compact Riemannian $3$-orbifolds with nonnegative curvature in Section 5.4 of \cite{KleLo}.



The second half of the paper studies some of the classical questions of $3$-manifolds in the broader context of Alexandrov spaces.  
First, we consider the Poincar\'e Conjecture applied to three-dimensional Alexandrov spaces instead of purely $3$-manifolds. We establish differences between three-dimensional Alexandrov spaces that are also homotopy spheres and three-dimensional Alexandrov spaces that are only simply connected. While the first class satisfies the same statement as for $3$-manifolds, there are plenty of simply connected three-dimensional Alexandrov spaces that are not homeomorphic to the $3$-sphere. 

\begin{prop}[Generalized Poincar\'e Conjecture for three-dimensional Alexandrov spaces]
\label{P:GPC}
A closed three-dimensional Alexandrov space that is also a homotopy sphere is homeomorphic to $\sphere^3$.
\end{prop}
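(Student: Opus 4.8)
The plan is to combine the basic structure theory of three-dimensional Alexandrov spaces with an Euler characteristic count, and then invoke Perelman's resolution of the Poincar\'e Conjecture. Write $cY$ for the open cone over a space $Y$. At any point $p$ of a closed three-dimensional Alexandrov space $X^3$ the space of directions $\Sigma_p$ is a closed two-dimensional Alexandrov space of curvature $\geq 1$, hence of positive Euler characteristic, hence homeomorphic to $\sphere^2$ or to $\RP^2$; moreover a small metric ball about $p$ is homeomorphic to $c\Sigma_p$ (conical neighborhood theorem). The topologically singular points are exactly those with $\Sigma_p\cong\RP^2$, and since the local homology of $X$ at such a $p$ is $H_*(c\RP^2,\,c\RP^2\setminus\{p\})\cong\widetilde H_{*-1}(\RP^2)$, which differs from $H_*(\RR^3,\RR^3\setminus\{0\})$, no such point can be approximated by other singular points nor lies in the closure of the manifold locus; thus the singular set $S=\{p_1,\dots,p_k\}$ is discrete, hence finite, and $X\setminus S$ is a topological $3$-manifold.

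First I would delete a small open conical neighborhood of each $p_i$ to obtain a compact $3$-manifold $\bar V$ whose boundary is a disjoint union of $k$ copies of $\RP^2$; by construction $X$ is recovered from $\bar V$ by coning off each boundary component. Coning off a copy of $\RP^2$ replaces a subspace of Euler characteristic $\chi(\RP^2)=1$ by a single point, also of Euler characteristic $1$, so $\chi(X)=\chi(\bar V)$. On the other hand the double of $\bar V$ along its boundary is a closed odd-dimensional manifold, so $0=2\chi(\bar V)-\chi(\partial\bar V)$; as $\partial\bar V$ consists of $k$ copies of $\RP^2$ this gives $\chi(\bar V)=\chi(\partial\bar V)/2=k/2$.

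Now the hypothesis enters. Since $X$ is a homotopy $3$-sphere it has the rational homology of $\sphere^3$, so $\chi(X)=0$. Combining the two computations, $0=\chi(X)=\chi(\bar V)=k/2$, which forces $k=0$. Hence $X$ has no topologically singular points: every space of directions is a topological $\sphere^2$, so every point has a neighborhood homeomorphic to $c\sphere^2\cong\RR^3$, and $X$ is a closed topological $3$-manifold. Being homotopy equivalent to $\sphere^3$ it is a homotopy $3$-sphere in the classical sense, and the Poincar\'e Conjecture (Perelman) yields $X\cong\sphere^3$.

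The structure-theoretic inputs (the conical neighborhood theorem, the classification of two-dimensional spaces of directions, discreteness of the singular set) are standard and may simply be quoted, and the Euler characteristic bookkeeping is routine; the single genuine external ingredient is Perelman's theorem, used only at the very last step. The point most in need of care when writing this out is the passage from "$p$ is topologically singular'' to the concrete picture of a neighborhood of $p$ as $c\RP^2$, and hence of $X$ globally as $\bar V$ with its boundary $\RP^2$'s coned off; but this is precisely what the conical neighborhood theorem provides, so I do not anticipate a real obstacle.
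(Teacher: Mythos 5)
Your argument is correct, and it reaches the conclusion by a different computation than the paper, although both start from the same decomposition $X=X_0\cup\bigcup_i C_0(\RP^2)$ furnished by the conical neighborhood theorem and the classification of two-dimensional spaces of directions, and both finish by invoking Perelman. The paper applies Mayer--Vietoris to this decomposition and concludes that $H_3(X;\Z)=0$ whenever the singular set is nonempty, which immediately contradicts $X$ being a homotopy $3$-sphere (and, as remarked after the statement, also handles homology spheres). You instead run an Euler characteristic count: $\chi(X)=\chi(\bar V)$, and doubling the compact manifold $\bar V$ along its $k$ boundary copies of $\RP^2$ gives $\chi(\bar V)=k/2$; since a homotopy sphere has $\chi=0$, this forces $k=0$, so $X$ is a manifold. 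Your route is equally elementary and in fact yields the general identity $\chi(X)=k/2$ for every closed Alexandrov $3$-space --- in particular the evenness of $k$, which the paper only mentions in passing in Section~2, and the statement that any closed Alexandrov $3$-space with vanishing Euler characteristic (e.g.\ a rational homology sphere) is a topological manifold --- whereas the paper's $H_3$ computation is the shortest path to the contradiction. One small wording slip: a topologically singular point certainly \emph{does} lie in the closure of the manifold locus, since every punctured conical neighborhood consists of manifold points; what your local-homology observation actually gives, and all that you need, is that singular points cannot accumulate, so the singular set is discrete and, by compactness, finite.
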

We remark that the proof of  Proposition~\ref{P:GPC} also implies that a closed, simply connected three-dimensional Alexandrov space that is a homology sphere must be a topological manifold, and is therefore homeomorphic to the $3$-sphere.

For the following statement, we say that an Alexandrov space $X^3$ has a given Thurston geometry (cf.~\cite{Sc}) if $X^3$ can be written as a quotient of the corresponding geometry by some cocompact lattice. We will say that such an Alexandrov $3$-space  is \emph{geometric}.

\begin{thm}[Simply connected three-dimensional Alexandrov spaces]
\label{T:SCAS}
For each Thurston geometry, except for 
$\nil$, $\SL$ and $\sol$, there exist closed, geometric, simply connected three-dimensional Alexandrov spaces that are not homeomorphic to the $3$-sphere.
\end{thm}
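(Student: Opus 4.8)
The plan is to produce explicit examples of simply connected Alexandrov $3$-spaces for each of the five admissible Thurston geometries ($\sphere^3$, $\sphere^2\times\RR$, $\EE^3$, $\HH^3$, $\HH^2\times\RR$) by taking suitable quotients of the model geometry by a cocompact lattice, and then to rule out the remaining three geometries ($\nil$, $\SL$, $\sol$). For the constructions, the natural source is involutions: given a closed geometric $3$-manifold $M=\tildeM/\Gamma$ carrying one of these geometries, an orientation-reversing isometric involution $\iota$ on $M$ with only isolated fixed points produces a quotient $M/\iota$ which is a three-dimensional Alexandrov space (by Perelman's work, quotients of Riemannian manifolds by isometric group actions are Alexandrov spaces) whose singular set consists of finitely many points with space of directions $\RP^2$. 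The first task is therefore to exhibit, for each of the five geometries, a closed geometric $3$-manifold together with such an involution whose quotient is simply connected. For $\sphere^3$ one can take the antipodal-type construction giving $\Susp(\RP^2)$; for $\sphere^2\times\RR$ the quotient of $\sphere^2\times\sphere^1$ by an appropriate involution; for $\EE^3$ a flat $3$-torus with an involution of the Kwun--Tollefson--Luft--Sjerve type whose quotient is simply connected; and for $\HH^3$ and $\HH^2\times\RR$ one needs hyperbolic (resp.\ $\HH^2\times\RR$) manifolds admitting orientation-reversing involutions with isolated fixed points and simply connected quotient — the existence of such manifolds is the place where one must invoke known results on hyperbolic $3$-manifolds with symmetries.

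To verify simple connectivity of a quotient $M/\iota$, I would use the orbifold fundamental group: $\pi_1^{orb}(M/\iota)$ fits in an exact sequence with $\pi_1(M)$ and the cyclic group generated by $\iota$, but since the fixed points are isolated (codimension $3$), the underlying space $M/\iota$ has the same fundamental group as the orbifold, namely $\pi_1(M/\iota)\cong \pi_1(M)\rtimes\Z/2$ modulo the relations coming from the fixed points — concretely, one checks that loops around fixed points are null-homotopic in $M/\iota$ and that the resulting group is trivial. In practice this reduces to a direct computation with the chosen $\Gamma$ and $\iota$.

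The obstruction results for $\nil$, $\SL$ and $\sol$ are the conceptual heart of the statement. Here the plan is: a closed geometric Alexandrov $3$-space with one of these geometries is, by definition, $\tildeM/\Gamma$ with $\Gamma$ a cocompact lattice in the full isometry group of the model; if it is a manifold it is a closed geometric $3$-manifold, and such manifolds with $\nil$, $\SL$ or $\sol$ geometry are never simply connected (they have infinite fundamental group — e.g.\ $\nil$ and $\SL$ manifolds are Seifert fibered with infinite $\pi_1$, and $\sol$ manifolds are torus bundles over $\sphere^1$ with infinite $\pi_1$), so the only way to get something new is to allow genuine topological singularities. One then argues that a quotient of one of these geometries by a cocompact lattice acting with singular points forces the space of directions at a singular point to be $\RP^2$ (the only closed positively curved Alexandrov surface other than $\sphere^2$), hence the singular point lifts to a fixed point of an orientation-reversing isometric involution on a manifold cover; passing to that manifold cover $N$, one gets a $\nil$, $\SL$ or $\sol$ manifold with an orientation-reversing involution with isolated fixed points, and $\pi_1(N)$ still injects (up to index $2$) into $\pi_1$ of the quotient, so the quotient has infinite fundamental group — contradicting simple connectivity. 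The main obstacle is making this last step rigorous: one must show that passing to singular quotients of these three geometries cannot kill the fundamental group, which amounts to a careful analysis of which finite extensions of lattices in $\Iso(\nil)$, $\Iso(\SL)$, $\Iso(\sol)$ can act with the required fixed-point structure, and showing the resulting orbifold fundamental group is always infinite; the fact that these geometries are aspherical (contractible models) and that their isometry groups have at most two-dimensional "compact part" is what ultimately prevents collapsing to a simply connected quotient.
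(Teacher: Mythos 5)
Your constructions for $\sphere^3$, $\sphere^2\times\RR$ and $\EE^3$ match the paper's, but the argument you propose for ruling out $\nil$, $\SL$ and $\sol$ contains a fatal error. You claim that if a singular geometric Alexandrov space $X$ is written as $N/\iota$ with $N$ a geometric manifold and $\iota$ an orientation-reversing isometric involution with isolated fixed points, then ``$\pi_1(N)$ still injects (up to index $2$) into $\pi_1$ of the quotient,'' so $\pi_1(X)$ is infinite. This is false: what injects with index $2$ is $\pi_1(N)\cong\pi_1(N\setminus\Fix(\iota))$ into $\pi_1(X\setminus X')$, where $X'$ is the singular set, and $\pi_1(X)$ is the quotient of $\pi_1(X\setminus X')$ obtained by killing the $\Z_2$ loops coming from the cone points over $\RP^2$; this quotient can perfectly well be trivial even when $\pi_1(N)$ is infinite. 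Indeed your own $\EE^3$ example (and the paper's $\HH^2\times\RR$ and $\HH^3$ examples) are exactly of this form: $T^3$, a hyperbolic surface times $\sphere^1$, and closed hyperbolic manifolds all have infinite fundamental group and aspherical, ``large-compact-part-free'' model geometries, yet their singular quotients are simply connected. So your proposed mechanism proves too much and cannot distinguish $\nil$, $\SL$, $\sol$ from $\EE^3$, $\HH^2\times\RR$, $\HH^3$. The actual obstruction in the paper is different in kind: $\nil$ and $\SL$ are \emph{chiral} (every isometry of the model preserves orientation), while an isolated fixed point of an isometric involution on a $3$-manifold forces the involution to be orientation reversing, and lifting it to the model geometry gives a contradiction; for $\sol$, which does admit orientation-reversing isometries, one must instead show directly that no closed $\sol$-manifold (torus bundle over $\sphere^1$ or sapphire) carries an orientation-reversing involution with fixed points, via Kim--Sanderson and a lifting argument through the torus-bundle double cover.

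There is also a genuine gap on the construction side: for $\HH^3$ you only say one must ``invoke known results on hyperbolic $3$-manifolds with symmetries,'' but the existence of a closed oriented hyperbolic manifold $M$ with an isometric involution having only isolated fixed points and simply connected quotient is a substantial fact; the paper obtains it from Panov--Petrunin's theorem on telescopic actions (any finitely presented group, in particular the trivial one, is $\pi_1(M/\Z_2)$ with isolated fixed points). For $\HH^2\times\RR$ the missing construction is easier and you should make it explicit: take a hyperbolic surface $\Sigma$ with a hyperelliptic isometric involution $h$ (quotient $\sphere^2$) and form $(\Sigma\times\sphere^1)/(h,\tau)$ with $\tau$ complex conjugation on $\sphere^1$; simple connectivity follows from the mapping-cylinder decomposition, as in the flat case.
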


We prove Proposition~\ref{P:GPC} and Theorem~\ref{T:SCAS} in Section~\ref{S:PCONJ}.

In the last section we consider the geometrization conjecture for three-dimensional Alexandrov spaces. 
We say that a closed three-dimensional Alexandrov space $X$ admits a geometric decomposition if there exists a collection of spheres, projective planes, tori and Klein bottles that decompose $X$ into geometric pieces.


\begin{thm}[Geometrization of three-dimensional Alexandrov spaces]
\label{T:GEO}
 A closed three-dimensional Alexandrov space admits a geometric decomposition into geometric three-dimensional Alexandrov spaces.  
\end{thm}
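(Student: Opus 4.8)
The plan is to reduce the statement for Alexandrov $3$-spaces to Perelman's geometrization of $3$-manifolds by passing to the ramified double cover (or, more precisely, by removing a neighborhood of the topologically singular set). Recall that the topological singularities of a closed Alexandrov $3$-space $X$ are exactly the points whose space of directions is homeomorphic to $\RP^2$, and there are only finitely many of them; away from these points $X$ is a topological $3$-manifold. Around each singular point $p$ the space $X$ looks like the cone over $\RP^2$, i.e.\ the nonorientable $[0,1]$-bundle over $\RP^2$ with a cone point, whose boundary (the link at distance $\eps$) is a $\RP^2$. So the first step is: let $S = \{p_1,\dots,p_k\}$ be the singular set, remove small conical neighborhoods $C_i \cong \mathrm{Cone}(\RP^2)$ of each $p_i$, and let $M = X \setminus \bigcup_i \inte C_i$. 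Then $M$ is a compact $3$-manifold with boundary a disjoint union of $k$ projective planes.

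The second step is to cap off or double. One convenient route: form the orientable double cover $\widehat{M} \to M$ (this exists because each boundary $\RP^2$ forces nonorientability locally in the right way, or one simply doubles along the $\RP^2$ boundary after noting a neighborhood of each $\RP^2$ is twisted). Apply Perelman's geometrization to $\widehat M$ (or to the manifold obtained by gluing $\mathrm{Cone}$-like caps, working in the orbifold/Alexandrov category directly via the orbifold geometrization of Boileau--Leeb--Porti if one prefers a cleaner statement). This yields a decomposition of the relevant manifold along spheres and tori into geometric pieces. The third step is to push this decomposition back down to $X$: the deck involution acts on the JSJ/geometric decomposition, and after averaging one may assume the decomposing surfaces are invariant; their images in $X$ are then spheres, projective planes, tori, or Klein bottles (the four surfaces allowed in the statement), and the images of the geometric pieces are quotients of geometries by cocompact lattices, hence geometric Alexandrov $3$-spaces in the sense defined before Theorem~\ref{T:SCAS}. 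Finally, one glues back the conical caps $C_i$ at the $p_i$; since $\mathrm{Cone}(\RP^2)$ is itself a quotient of a geometric model (it is $\mathrm{Susp}(\RP^2)$ minus a ball, a $\mathbb{S}^3$-geometric piece, using that $\mathrm{Susp}(\RP^2) = \mathbb{S}^3/\Z_2$), these caps contribute geometric pieces as well, bounded by the $\RP^2$'s we introduced. Assembling everything gives the geometric decomposition of $X$ claimed.

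The main obstacle is the bookkeeping at the interface between the manifold part and the singular part: one must check that cutting along the $\RP^2$ links is compatible with whatever decomposition Perelman/Hamilton--Ricci flow (or the orbifold theorem) produces on the manifold-with-boundary, and that no geometric piece is destroyed or made non-geometric by the capping and re-gluing. In particular one needs the $\RP^2$'s to be incompressible or, when they bound, to be allowed as decomposing surfaces — this is exactly why projective planes and Klein bottles appear in the statement alongside the usual spheres and tori. A secondary point to verify is that the equivariance step genuinely lets us take the decomposing surfaces invariant under the relevant finite symmetry, so that the quotient surfaces are embedded and two-sided-or-one-sided as needed; this is standard for $\mathbb{S}^2$'s and incompressible tori but should be stated carefully.

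I would organize the section as: (1) recall the structure of the singular set and conical neighborhoods; (2) state the manifold-with-boundary $M$ and its orientation double cover; (3) invoke Perelman's geometrization (or the orbifold theorem) on the closed-up manifold; (4) make the decomposition equivariant and descend it; (5) reattach the conical caps and identify each resulting piece as a geometric Alexandrov $3$-space, using Theorem~\ref{PC} and Theorem~\ref{T:NNC} to recognize the pieces containing singular points. This keeps the genuinely new content — the treatment of the $\RP^2$-singularities — separated from the invocation of the deep manifold results.
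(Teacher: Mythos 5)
Your overall strategy coincides with the paper's: pass to the orientable double branched cover $M$ of $X$ with its involution $\iota$, geometrize upstairs, and descend the decomposition to $X=M/\iota$. The genuine gap is in the descent. The sentence ``after averaging one may assume the decomposing surfaces are invariant'' hides precisely the content of the theorem: one cannot average an embedded system of spheres and tori, and a non-equivariant application of Perelman's geometrization to the cover gives no control whatsoever on how $\iota$ interacts with the prime decomposition, the JSJ tori, or the geometric structures on the pieces. Note that the quotient of an invariant geometric piece is geometric only if $\iota$ is conjugate to an \emph{isometry} of that geometric structure; this ``standardness of finite actions'' is a deep fact, not a formal averaging step. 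The paper supplies exactly this missing machinery: it runs the \emph{equivariant} Ricci flow with surgery of Dinkelbach--Leeb \cite{DL} on $(M,\iota)$, uses \cite[Corollary~4.5]{Di} for the $\Z_2$-invariant components that go extinct (spherical space forms, $\sphere^2\times\sphere^1$, $\RP^3\#\RP^3$), \cite[Theorem~H]{DL} for standardness of the action on the hyperbolic (thick) part, Meeks--Yau \cite{MY} (see also \cite{JR}) to make the connected sum decomposition of the graph-manifold thin part invariant, and Meeks--Scott \cite{MS} for the equivariant JSJ decomposition of Haken pieces that are not torus bundles and for standardness on Seifert pieces. Any correct write-up along your lines must either invoke this equivariant package or the orbifold geometrization theorem; as written, your step (3)$\to$(4) would fail.

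Two smaller points. First, before any smooth or Riemannian machinery (equivariant Ricci flow, or viewing $X$ as a smooth orbifold for your Boileau--Leeb--Porti alternative) can be applied, the branched-cover involution, which is a priori only locally linear, must be shown equivalent to a smooth involution; the paper does this via Kwasik--Lee \cite{KwLe}, and your proposal omits it. Also be aware that the involution on the capped-off cover is not a free deck transformation --- it fixes the centers of the capping balls --- and that the classical orbifold theorem is usually stated for orientable orbifolds with one-dimensional singular locus, so the orbifold route really requires the Ricci-flow version for general compact $3$-orbifolds \cite{KleLo}. Second, your bookkeeping that cuts out the cones $C_0(\RP^2)$ and declares them $\sphere^3$-geometric pieces does not match the definition used in the paper (a geometric piece is a quotient of a Thurston geometry by a cocompact lattice); in the paper's proof the topologically singular points remain inside quotient pieces such as $\Susp(\RP^2)$ or $\Susp(\RP^2)\#\Susp(\RP^2)$, obtained as quotients of invariant geometric components of the cover by the (standard, isometric) involution.
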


The proof of Theorem~\ref{T:GEO} can be found in Section~\ref{S:GEO}.


\subsection*{A brief overview of three-dimensional Alexandrov spaces}
An \emph{Alexandrov space} is a locally compact metric space $X$ with an inner length metric that satisfies locally a lower curvature bound $k_0$ in the Alexandrov sense; roughly speaking, this means that geodesic triangles in $X$ are ``fatter'' than equivalent ones in the space form of constant curvature $k_0$. This condition has strong consequences on the structure of the space. For our purposes, we recall the following three: 


\begin{enumerate}
\item We can define tangent directions at any point of the space, and an angle distance between them. The metric completion of the tangent directions yields the so-called \emph{space of directions of $X$ at $p$}, which is usually denoted by $\Sigma_p$. With the distance induced by the angle distance, $\Sigma_p$ is again an Alexandrov space  of dimension $\dim X-1$ and with curvature greater than or equal to one. 
\item Positively curved closed Alexandrov spaces in dimension two are necessarily homeomorphic to either $\sphere^2$ or to the real projective plane $\RP^2$. 
\item Any point $p$ in $X$ has a neighborhood homeomorphic to a topological cone over the space of directions at $p$; this is the content of Perelman's conical neighborhood theorem (see \cite{Pe}).
\end{enumerate}

 Because of the third statement,  an Alexandrov $3$-space $X$ (without boundary)  that is not homeomorphic to a topological $3$-manifold must have some point whose space of directions $\Sigma$ is not homeomorphic to a $2$-sphere; since $\Sigma$ must be positively curved, it must be homeomorphic to  the real projective plane $\RP^2$.  
 Hence, the conical neighborhood theorem implies that $X$ is homeomorphic to a  $3$-manifold with a finite number of $\RP^2$-boundary components where we glue in cones over $\RP^2$. However, we can improve this and exhibit $X$ as the base of a two-fold branched cover $\pi:Y\rightarrow X$ whose total space $Y$ is a closed orientable manifold and whose branching set is the set of points with space of directions homeomorphic to $\RP^2$.
 
 \begin{lem}
 \label{L:BR_COV}
 Let $X$ be a closed three-dimensional Alexandrov space. If $X$ is not a topological manifold,  then there is a closed, orientable $3$-manifold $Y$ and an orientation reversing involution $\iota:Y\to Y$ with isolated fixed points such that $X$ is homeomorphic to the quotient $Y/\iota$. 
 \end{lem}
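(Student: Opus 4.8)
The plan is as follows. By Perelman's conical neighborhood theorem and the discussion above, the set $S=\{p_1,\dots,p_k\}\subset X$ of points whose space of directions is homeomorphic to $\RP^2$ is finite, and each $p_i$ has a neighborhood homeomorphic to the open cone on $\RP^2$. Removing these open cone neighborhoods produces a compact topological $3$-manifold $M$ whose boundary consists of $k$ copies of $\RP^2$, and $X$ is recovered from $M$ by coning off each boundary component. Since a compact orientable $3$-manifold has orientable boundary, $M$ is non-orientable; equivalently, $w_1(M)\in H^1(M;\Z/2)$ is nonzero. Moreover, for each $i$ a collar $\RP^2\times[0,1)\hookrightarrow M$ of the $i$-th boundary component is non-orientable, so $w_1(M)$ restricts nontrivially to $\pi_1(\RP^2_i)=\Z/2$; that is, the core loop of each boundary $\RP^2$ is orientation-reversing in $M$.

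Next I would pass to the orientation double cover $\pi\colon\tilde M\to M$, i.e.\ the connected double cover classified by $w_1(M)$; it is an orientable compact $3$-manifold, and its nontrivial deck transformation $\tau$ reverses orientation. Because $w_1(M)$ is nonzero on each $\pi_1(\RP^2_i)$, the preimage $\pi^{-1}(\RP^2_i)$ is a connected double cover of $\RP^2$, hence a $2$-sphere, on which $\tau$ acts as the antipodal involution. Thus $\partial\tilde M$ is a disjoint union of $k$ spheres, and capping each one with a $3$-ball yields a closed, orientable $3$-manifold $Y$.

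It remains to extend $\tau$ to an involution of $Y$. On each capping ball $B^3\subset\RR^3$, attached along $\partial B^3=\sphere^2$ so that the antipodal map of $\sphere^2$ agrees with the restriction of $\tau$, I would extend $\tau$ by the antipodal map $-\operatorname{id}$ of $\RR^3$ (which carries $B^3$ to itself); this is an involution with the single fixed point $0$, reversing the orientation of $\RR^3$. The resulting involution $\iota\colon Y\to Y$ is then orientation-reversing, since it reverses orientation both on $\tilde M$ and on each ball, and its fixed-point set is the set of $k$ ball centers, which are isolated. Finally, the quotient map $Y\to Y/\iota$ restricts to $\tilde M\to\tilde M/\tau=M$ and to $B^3\to B^3/(-\operatorname{id})$, which is the cone on $\RP^2$; these identifications agree along the spheres, where both restrict to the quotient $\sphere^2\to\RP^2$, so gluing them exhibits $Y/\iota$ as $M$ with its boundary components coned off, which is exactly $X$.

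The only genuinely delicate point I anticipate is the claim in the second step that the orientation double cover restricts on each boundary component to the connected cover $\sphere^2\to\RP^2$ rather than to two disjoint copies of $\RP^2$; this is precisely what the collar non-orientability observation in the first step secures. The rest amounts to assembling standard facts about double covers and keeping track of the (equivariant) gluings.
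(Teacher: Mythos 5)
Your proposal is correct and follows essentially the same route as the paper: remove the conical $\RP^2$-cone neighborhoods, pass to the orientation double cover of the resulting nonorientable manifold with $\RP^2$ boundary, note the boundary lifts to spheres, cap with $3$-balls, and extend the deck involution by coning the antipodal map. In fact you supply details the paper leaves implicit, namely that $w_1$ is nontrivial on each boundary $\RP^2$ (so each boundary preimage is a single $2$-sphere) and the explicit equivariant extension over the capping balls.
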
 
 
\begin{proof}
Let $x_1, \ldots, x_n$ be the points in $X$ whose space of directions is a projective plane; remove disjoint open conical neighborhoods around each $x_i$ to obtain a $3$-manifold $X_0$ with $n$ boundary components.  The manifold $X_0$ is  nonorientable, because each projective plane in the boundary is two-sided. Let $\tilde{X}_0\to X_0$ be the orientable double cover of $X_0$. Each boundary component in $\tilde{X}_0$ is now a $2$-sphere, and we can close $\tilde{X}_0$  by gluing $3$-balls to its boundary to obtain the closed orientable manifold $Y$. It is clear now that the involution on $\tilde{X}_0$ can be extended to an involution on $Y$ whose quotient is $X$.
\end{proof}

The following lemma is a consequence of recent work of Grove and Wilking \cite[Section~5]{GW} and will be assumed throughout our paper. 

\begin{lem}
 \label{L:BR_COV_GEOM}
 Let $X$ be a closed three-dimensional Alexandrov space with curvature bounded below by $k$, with $k=1$ or $0$,  and assume that $X$ is not a topological manifold. If $Y$ is the orientable double branched cover of $X$ in Lemma~\ref{L:BR_COV}, then the following hold:
 \begin{enumerate}
 	\item The metric in $X$ can be lifted to $Y$ so that  $Y$ is an Alexandrov  space with curvature bounded below by $k$.
	\item The involution $\iota:Y\to Y$ is an isometry.
 \end{enumerate}
 \end{lem}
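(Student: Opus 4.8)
The plan is to construct the metric on $Y$ in two stages --- first on the complement of the preimages of the non-manifold points $x_1,\dots,x_n$, where it is essentially forced, and then to check that it extends across those preimages with the curvature bound intact --- and then to deduce that $\iota$ is an isometry directly from the way the metric was built.

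By the construction in Lemma~\ref{L:BR_COV}, the restriction $\pi\colon Y\setminus\pi^{-1}\{x_1,\dots,x_n\}\to X\setminus\{x_1,\dots,x_n\}$ is a genuine two-fold covering map. I would therefore pull back the length structure of $X$ to $Y\setminus\pi^{-1}\{x_1,\dots,x_n\}$, making $\pi$ a local isometry there; this open length space is then locally isometric to $X$, hence locally of curvature $\ge k$. Next I would let $d_Y$ be the associated intrinsic metric on all of $Y$, allowing paths to run through the branch points. A short argument shows this is the right object: if $p$ lies close to $x_i$, a minimal geodesic of $X$ from $p$ to $x_i$ meets none of the $x_j$ in its interior, so it lifts to a path in $Y$ of the same length ending at the preimage of $x_i$; it follows that $(Y,d_Y)$ is the metric completion of $Y\setminus\pi^{-1}\{x_i\}$, that it is locally compact and hence (Hopf--Rinow) a geodesic space, that there is exactly one point $\tilde x_i$ of $Y$ over each $x_i$ with $\pi$ a genuine $2$-to-$1$ branched cover near $\tilde x_i$, and that $\pi$ is $1$-Lipschitz and distance non-increasing on all of $Y$.

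The substantive point is that $(Y,d_Y)$ has curvature $\ge k$ at each branch point $\tilde x_i$. The space of directions $\Sigma_{\tilde x_i}Y$ is the connected metric double cover of $\Sigma_{x_i}X\cong\RP^2$; since the covering is a local isometry of complete length spaces and $\Sigma_{x_i}X$ has curvature $\ge 1$, Toponogov's globalization theorem (see \cite{BGP}) gives that $\Sigma_{\tilde x_i}Y$ is $\sphere^2$ with curvature $\ge 1$. What remains is the triangle, equivalently hinge, comparison for configurations with a vertex at $\tilde x_i$, and this is the step I expect to be the main obstacle: it does \emph{not} follow formally from the comparison at $x_i$, because unwrapping $\RP^2$ to $\sphere^2$ can only enlarge the angle at the branch point but may also enlarge the opposite side, and in the hinge inequality these two effects pull in opposite directions. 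Here I would invoke the structural analysis of a neighborhood of an $\RP^2$-point in a three-dimensional Alexandrov space carried out by Grove and Wilking in \cite[Section~5]{GW}, which supplies a model for such a neighborhood whose branched double cover is Alexandrov with curvature $\ge k$. Since both that local model and $d_Y$ restrict to the canonical lift on the punctured neighborhood of $\tilde x_i$, and a length metric is determined by its restriction to the complement of such a point, the two agree; gluing the local models to $Y\setminus\pi^{-1}\{x_i\}$ yields assertion~(1). This is precisely why the lemma is stated as a consequence of \cite{GW} rather than proved from first principles.

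Finally, assertion~(2) is immediate from the construction. The deck transformation of the covering $\pi$ on $Y\setminus\pi^{-1}\{x_i\}$ preserves the lifted length structure, and it extends to $Y$ by fixing each $\tilde x_i$ and acting on $\Sigma_{\tilde x_i}Y\cong\sphere^2$ as the deck involution of $\sphere^2\to\RP^2$, which is an isometry; this extension is the map $\iota$. Hence $\iota$ preserves the lengths of all paths in $Y$, and therefore preserves the intrinsic metric $d_Y$. Being in addition a homeomorphism, $\iota$ is an isometry of $(Y,d_Y)$.
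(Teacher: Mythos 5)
Your proposal is correct and rests on the same ingredient as the paper, which offers no independent argument but states the lemma as a consequence of Grove--Wilking \cite[Section~5]{GW} --- precisely the point where you defer the curvature comparison at the branch points. The additional detail you supply (the pulled-back length structure, the completion argument, and the deck involution being an isometry of the lifted metric) is accurate and consistent with that citation.
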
 

Finally, we point out that the involution $\iota: Y\rightarrow Y$ in Lemmas~\ref{L:BR_COV} and \ref{L:BR_COV_GEOM} is locally linear, as a consequence of the work of Hirsch, Smale \cite{HS} and Livesay \cite{Li}. By a result of Kwasik and Lee \cite{KwLe}, the involution $\iota$ on $Y$ is equivalent to a PL involution. This  will allow us to use results on topological $3$-manifolds  equipped with PL involutions, which were extensively studied in the 1970's and 1980's. 


\begin{ack}
 The authors would like to thank Jos\'e Carlos G\'omez La\-rra\~{n}aga, Karsten Grove, Wolfgang Heil, Jos\' e Mar\'{\i}a Montesinos, Joan Porti and Burkhard Wilking for helpful conversations. The authors would also like to thank the Posgrado de Excelencia Internacional en Matem\'aticas at the Universidad Aut\'o\-noma de Madrid, where part of the present work was completed.
\end{ack}


\section{Alexandrov $3$-spaces of positive curvature}
\label{S:PC}

\subsection{Proof of Theorem~\ref{PC}} Let $X$ be a closed three-dimensional Alexandrov space with positive curvature. We may assume, after rescaling the metric if necessary, that $\curv X \geq 1$. 
 Let $X'$ be the set of points in $X$ whose space of directions is homeomorphic to $\RP^2$. By hypothesis, $X'$ is nonempty. Recall that each element in $X'$ has a neighborhood homeomorphic to the Euclidean cone $C_0(\RP^2)$. By compactness, the set $X'$ is finite.

Let $x_1,\ldots,x_k$ be the points in $X'$. After removing  a neighborhood homeomorphic to $C_0(\RP^2)$ of each $x_i$, we obtain a topological $3$-manifold $X_0$ with boundary $k$ copies of  $\RP^2$. It is easy to see that $k$ is an even number, although we will not need that in what follows.

Let $\pi:Y\rightarrow X$ be the two-fold branched cover over $X$ with branching set $X'$, as in Lemma~\ref{L:BR_COV}. Let $y_i=\pi^{-1}(x_i)$, $i=1,\ldots,k$ and let $Y'=\{\,y_1,\ldots,y_r\,\}$. 
By Lemma~\ref{L:BR_COV_GEOM}, $Y$ is an Alexandrov space with $\curv\geq 1$. Since $Y$ has positive curvature bounded away from zero, it has finite fundamental group. On the other hand, $\pi_1(Y)\simeq \pi_1(Y\setminus Y')$, since $Y'$ is a finite set of points in $Y$. Since $\pi:Y\setminus Y'\rightarrow X\setminus X'$ is a regular two-fold cover, $\pi_*(\pi_1(Y\setminus Y'))$ is a subgroup of index $2$ in $\pi_1(X\setminus X')$. Hence, $\pi_1(X\setminus X')$ is finite. It follows from Epstein's theorem (cf. \cite[Chapter 9]{He}), and Perelman's proof of the Poincar\'e Conjecture, that $X\setminus X'$ is homeomorphic to $\RP^2\times [0,1]$. Thus $k=2$ and the conclusion of the theorem follows. Observe that $Y$ is homeomorphic to $\sphere^3$ and, by work of Hirsch, Smale \cite{HS} and Livesay \cite{Li}, the action of $\mathbb{Z}_2$ corresponding to the two-fold branched cover is equivalent to a linear action.   \hfill $\square$

\subsection{Corollaries.} We now list some consequences of Theorem~\ref{PC}. 
 
 
\begin{cor}
\label{C:SC3DCLAS}
A closed, simply connected three-dimensional Alexandrov space of positive curvature is homeomorphic to $\sphere^3$ or to $\Susp(\RP^2)$.
\end{cor}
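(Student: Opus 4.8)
The plan is to split according to whether or not $X^3$ is a topological manifold and apply Theorem~\ref{PC} in the singular case. First suppose $X^3$ has a point whose space of directions is homeomorphic to $\RP^2$. Then $X^3$ is not a topological manifold, so Theorem~\ref{PC} applies directly and gives $X^3 \cong \Susp(\RP^2)$. (Note that $\Susp(\RP^2)$ is indeed simply connected, since removing the two cone points leaves $\RP^2\times(0,1)$, whose fundamental group $\Z_2$ is killed upon filling in the cone points — so this case is consistent with the hypothesis.)

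The remaining case is that $X^3$ has no point with space of directions $\RP^2$; by the structure discussion in the overview (using Perelman's conical neighborhood theorem and the classification of positively curved closed Alexandrov surfaces as $\sphere^2$ or $\RP^2$), every space of directions is then homeomorphic to $\sphere^2$, so $X^3$ is a closed topological $3$-manifold. Here I would invoke that a closed Alexandrov $3$-manifold with $\curv > 0$ has finite fundamental group (by Bonnet--Myers, since after rescaling $\curv \geq 1$), but more to the point we are already assuming $X^3$ is simply connected. Then Perelman's resolution of the Poincar\'e Conjecture — which applies to closed simply connected topological $3$-manifolds — yields $X^3 \cong \sphere^3$.

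Combining the two cases gives that $X^3$ is homeomorphic to $\sphere^3$ or to $\Susp(\RP^2)$, as claimed. The only point requiring a little care is the reduction in the manifold case: one must confirm that the absence of $\RP^2$ spaces of directions genuinely forces $X^3$ to be a manifold, which is exactly the content of the conical neighborhood theorem together with item (2) of the overview (a closed positively curved Alexandrov surface is $\sphere^2$ or $\RP^2$, and only $\sphere^2$ yields a manifold point). I do not expect any serious obstacle; the corollary is essentially a bookkeeping consequence of Theorem~\ref{PC} and the Poincar\'e Conjecture, with the mild subtlety being simply to check that the two conclusions are mutually exclusive and jointly exhaustive. \hfill $\square$
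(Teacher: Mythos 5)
Your argument is correct and follows essentially the same route as the paper: Theorem~\ref{PC} handles the case with an $\RP^2$ space of directions, and Perelman's Poincar\'e Conjecture handles the manifold case (where the paper also notes finiteness of $\pi_1$, which is redundant under your simply connected hypothesis). The case split via the conical neighborhood theorem is exactly the intended bookkeeping, so nothing is missing.
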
 


\begin{cor}
\label{C:3DCLAS}
A closed, three-dimensional Alexandrov space of positive curvature is homeomorphic to a spherical $3$-manifold or to $\Susp(\RP^2)$.
\end{cor}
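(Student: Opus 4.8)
The plan is to argue by the same dichotomy used in the proof of Corollary~\ref{C:SC3DCLAS}, according to whether the finite set $X'\subset X$ of points whose space of directions is homeomorphic to $\RP^2$ is empty or not. If $X'\neq\emptyset$, then Theorem~\ref{PC} applies verbatim and yields that $X$ is homeomorphic to $\Susp(\RP^2)$, so this case is immediate.

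Suppose now $X'=\emptyset$. Then every space of directions of $X$ is homeomorphic to $\sphere^2$, so Perelman's conical neighborhood theorem shows that $X$ is a closed topological $3$-manifold. After rescaling we may assume $\curv X\geq 1$, so $X$ has finite diameter by the Bonnet--Myers argument for Alexandrov spaces. The universal cover $\tilde X$ is again a complete Alexandrov $3$-space with $\curv\geq 1$, and it is a topological $3$-manifold since it covers one; hence $\tilde X$ also has finite diameter, is therefore compact, and $\pi_1(X)$ is finite. Being a simply connected closed $3$-manifold, $\tilde X$ is homeomorphic to $\sphere^3$ by Perelman's proof of the Poincar\'e conjecture. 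Thus $X$ is a closed $3$-manifold with finite fundamental group, and by Perelman's resolution of the elliptization conjecture $X$ is homeomorphic to a spherical space form, i.e.\ to $\sphere^3/\Gamma$ with $\Gamma\subset\SO(4)$ acting freely. Combining the two cases proves the corollary.

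The step requiring the heaviest input is the passage from ``closed $3$-manifold with finite $\pi_1$'' to ``homeomorphic to a spherical $3$-manifold,'' which rests on Perelman's elliptization theorem; the rest is elementary Alexandrov geometry together with results already invoked in the proof of Theorem~\ref{PC}. One should also check the trivial but necessary observation that the two conclusions are genuinely distinct and exhaustive: $\Susp(\RP^2)$ is not a manifold (its suspension points have link $\RP^2$), while the spherical space forms are precisely the outcomes of the manifold case, so no further bookkeeping is needed.
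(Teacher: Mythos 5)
Your proposal is correct and follows essentially the same route as the paper: the paper also deduces this corollary from Theorem~\ref{PC} in the singular case and, in the manifold case, from finiteness of the fundamental group under positive curvature together with Perelman's resolution of the Poincar\'e and Elliptization Conjectures. Your write-up merely fills in the same steps (rescaling, Bonnet--Myers, passing to the universal cover) in more detail than the paper's one-line justification.
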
 


\begin{cor}
\label{C:4DSDIR} The space of directions of a $4$-dimensional Alexandrov space without boundary is homeomorphic to $\Susp(\RP^2)$ or to a spherical $3$-manifold. 
\end{cor}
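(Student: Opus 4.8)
The plan is to reduce the statement to Corollary~\ref{C:3DCLAS} by identifying the space of directions $\Sigma := \Sigma_p$ of a $4$-dimensional Alexandrov space $W^4$ (without boundary) at a point $p$ as a closed, positively curved three-dimensional Alexandrov space. First I would recall that, by statement~(1) in the overview of Alexandrov geometry, $\Sigma$ is a compact Alexandrov space of dimension $\dim W - 1 = 3$ with curvature bounded below by $1$; in particular it is positively curved. The key point requiring justification is that $\Sigma$ is \emph{closed}, i.e.\ has no boundary: this is where the hypothesis that $W^4$ itself has no boundary enters. Indeed, by Perelman's conical neighborhood theorem, a neighborhood of $p$ in $W^4$ is homeomorphic to the cone $C_0(\Sigma)$; the boundary of an Alexandrov space is defined inductively via spaces of directions, and the boundary of $W^4$ near $p$ is precisely the cone on the boundary of $\Sigma$ (together with the apex when $\partial\Sigma \ne \emptyset$), so $\partial W^4 = \emptyset$ forces $\partial \Sigma = \emptyset$.

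With $\Sigma$ now known to be a closed, positively curved three-dimensional Alexandrov space, I would simply invoke Corollary~\ref{C:3DCLAS}: such a space is homeomorphic either to a spherical $3$-manifold or to $\Susp(\RP^2)$. This is exactly the claimed dichotomy, so the proof concludes immediately.

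The main obstacle is the (mild) bookkeeping around the notion of boundary for Alexandrov spaces and its compatibility with taking spaces of directions; one must be careful that ``Alexandrov space without boundary'' is used consistently, and that the inductive definition of boundary indeed gives $\partial \Sigma_p \subseteq$ (link of $\partial W$ at $p$), so that emptiness of $\partial W^4$ propagates to $\Sigma_p$ for every $p$. Once this is granted, everything else is a direct citation of the three-dimensional classification. A one-line proof along these lines should suffice:

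\begin{proof}[Proof of Corollary~\ref{C:4DSDIR}]
Let $W^4$ be a $4$-dimensional Alexandrov space without boundary and let $\Sigma = \Sigma_p$ be its space of directions at a point $p$. By the basic structure theory, $\Sigma$ is a compact three-dimensional Alexandrov space with $\curv \Sigma \geq 1$, and since $W^4$ has no boundary, neither does $\Sigma$. Thus $\Sigma$ is a closed, positively curved three-dimensional Alexandrov space, and the conclusion follows from Corollary~\ref{C:3DCLAS}.
\end{proof}
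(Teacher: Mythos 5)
Your proposal is correct and follows essentially the same route as the paper: both reduce to Corollary~\ref{C:3DCLAS} via the fact that $\Sigma_p$ is a compact three-dimensional Alexandrov space with $\curv \geq 1$, with your write-up merely making explicit the (correct) point that $\partial W^4 = \emptyset$ forces $\partial \Sigma_p = \emptyset$, which the paper leaves implicit.
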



\begin{cor}
\label{C:SUSP} A closed $4$-dimensional Alexandrov space of  curvature bounded below by $1$ and diameter greater than $\pi/2$ is homeomorphic to the suspension of a spherical $3$-manifold or to $\Susp^2(\RP^2)$, the double suspension of $\RP^2$. 
\end{cor}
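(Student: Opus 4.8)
The plan is to show that $X$ is homeomorphic to $\Susp(\Sigma)$ for some closed, positively curved $3$-dimensional Alexandrov space $\Sigma$, and then to invoke the classification of such spaces in Corollary~\ref{C:3DCLAS}. This is the Alexandrov-space version of the Grove--Shiohama diameter sphere theorem, carried out through Perelman's critical point theory; the new content is essentially the identification of the suspension factor.

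Set $d=\diam X$, so $\pi/2<d\le\pi$. If $d=\pi$, maximal diameter rigidity for Alexandrov spaces shows directly that $X$ is isometric to a spherical suspension $\Susp(\Sigma)$ with $\Sigma$ closed and of curvature $\ge 1$. Assume then $\pi/2<d<\pi$, fix $p,q$ with $\dist(p,q)=d$, and put $f=\dist(p,\cdot)\colon X\to[0,d]$. I claim the critical set of $f$, in the sense of critical point theory on Alexandrov spaces, is $\{p,q\}$. Let $x\notin\{p,q\}$ and set $a=\dist(x,p)$, $b=\dist(x,q)$, so $a,b\in(0,d]\subset(0,\pi)$. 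Fix a minimal geodesic from $x$ to $q$ with initial direction $v\in\Sigma_x$, and let $u\in\Sigma_x$ be the initial direction of any minimal geodesic from $x$ to $p$. The hinge form of Toponogov's comparison for the triangle $pxq$, combined with the spherical law of cosines, gives
\[
\cos d\ \ge\ \cos a\,\cos b+\sin a\,\sin b\,\cos\angle(u,v).
\]
A short case analysis on the signs of $\cos a,\cos b$ shows $\cos a\,\cos b>\cos d$ as soon as $d>\pi/2$ and $a,b\in(0,d]$; since $\sin a\,\sin b>0$, this forces $\cos\angle(u,v)<0$. Hence $v$ makes an angle strictly greater than $\pi/2$ with every minimal geodesic from $x$ to $p$, so $v$ is a direction of strict decrease for $f$ and $x$ is regular. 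In particular $f^{-1}(d)$, consisting of global maxima of $f$, lies in the critical set $\{p,q\}$, and as $p\notin f^{-1}(d)$ we get $f^{-1}(d)=\{q\}$.

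Perelman's fibration theorem now shows that $f$ restricted to $X\setminus\{p,q\}\to(0,d)$ is a (necessarily trivial) fiber bundle, and his conical neighborhood theorem (see~\cite{Pe}) identifies small metric balls about $p$ and $q$ with the cones over $\Sigma_p$ and $\Sigma_q$. Putting these together, $X$ is homeomorphic to two cones $C(\Sigma_p)$ and $C(\Sigma_q)$ glued along a homeomorphism of their boundaries; since the cone over a space carries a self-homeomorphism inducing any prescribed homeomorphism of the base, this space is homeomorphic to $\Susp(\Sigma_p)$. Thus in all cases $X\cong\Susp(\Sigma)$ for a closed, positively curved $3$-dimensional Alexandrov space $\Sigma$, with empty boundary since $X$ has none. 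By Corollary~\ref{C:3DCLAS}, $\Sigma$ is homeomorphic to a spherical $3$-manifold or to $\Susp(\RP^2)$; in the first case $X$ is the suspension of a spherical $3$-manifold, and in the second $X\cong\Susp(\Susp(\RP^2))=\Susp^2(\RP^2)$.

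The step that needs the most care is the claim that $\{p,q\}$ is the full critical set of $f$: this is the heart of the Grove--Shiohama argument, and the subtle point is that the displayed inequality be \emph{strict}, i.e.\ that $\cos a\,\cos b>\cos d$ hold even when $a$ or $b$ equals the diameter $d$ --- precisely the place where $d>\pi/2$ (rather than $d\ge\pi/2$) enters, and the reason the hypothesis cannot be weakened. Everything after that --- Perelman's fibration theorem, the identification of a double cone with a suspension, and the appeal to Corollary~\ref{C:3DCLAS} --- is routine.
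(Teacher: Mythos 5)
Your proof is correct and follows essentially the same route as the paper: the paper simply cites the fact that a closed $n$-dimensional Alexandrov space with $\curv\geq 1$ and $\diam>\pi/2$ is homeomorphic to a suspension over a closed $(n-1)$-dimensional Alexandrov space with $\curv\geq 1$, and then applies Corollary~\ref{C:3DCLAS}, exactly as you do, with your Grove--Shiohama/Perelman critical-point argument supplying a proof of that cited fact. One trivial slip: the direction $v$ toward $q$ is a direction of strict \emph{increase} of $f=\dist(p,\cdot)$, not decrease, but regularity only requires the angle condition $\angle(u,v)>\pi/2$ that you established, so the argument stands.
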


Corollaries~\ref{C:SC3DCLAS} and~\ref{C:3DCLAS} follow from Perelman's proofs of the Poincar\'e Conjecture and Thurston's Elliptization Conjecture, along with the fact that an Alexandrov space of positive curvature has finite fundamental group. Corollary~\ref{C:4DSDIR} is a consequence of Corollary~\ref{C:3DCLAS}, since the space of directions at any point of an $n$-dimensional Alexandrov space is isometric to a compact $(n-1)$-dimensional Alexandrov space with curvature bounded below by $1$. Corollary~\ref{C:SUSP} follows from the fact that an $n$-dimensional Alexandrov space of curvature bounded below by $1$ and diameter greater than $\pi/2$ is homeomorphic to the suspension of a compact $(n-1)$-dimensional Alexandrov space of curvature bounded below by $1$.

Recall that an Alexandrov space is an \emph{Alexandrov manifold} if it is homeomorphic to a topological manifold, and an Alexandrov space is called \emph{topologically regular} if every space of directions is homeomorphic to a sphere. Clearly, a topologically regular Alexandrov space is an Alexandrov manifold, but the converse is not necessarily true. Indeed, recall that the double  suspension $\Susp^2(P)$ of the Poincar\'e homology sphere $P$ is homeomorphic to $\sphere^5$. Since $P$ admits a Riemannian metric of constant positive curvature, $\Susp^2(P)$ admits an Alexandrov metric $d$ of positive curvature, given  by considering $\Susp^2(P)$ as a double spherical suspension. It follows that $(\Susp^2(P),d)$ is a five-dimensional Alexandrov manifold. On the other hand, $(\Susp^2(P),d)$ is  is not topologically regular, since it contains  points whose space of directions is homeomorphic to $\Susp(P)$, which is not a manifold. Using Theorem~\ref{PC} we recover the following
result, implicit in V.~Kapovitch's paper \cite{Ka}.


\begin{cor}
\label{C:TRALEX} Let $X^n$ be an $n$-dimensional Alexandrov manifold. If $n\leq 4$, then $X^n$ is topologically regular. 
\end{cor}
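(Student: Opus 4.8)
The plan is a direct case analysis on $n$; the point is that for an Alexandrov manifold the conical neighborhood theorem forces every space of directions to be a homotopy sphere, after which the low-dimensional classification results do the rest. Fix $p\in X^n$ and assume, as usual, that $X$ has empty boundary, so that $\Sigma_p$ is a closed $(n-1)$-dimensional Alexandrov space with $\curv\geq 1$ (if $X$ had nonempty boundary one could instead pass to its double, which is again an Alexandrov manifold, or work only at interior points). By Perelman's conical neighborhood theorem, $p$ has a neighborhood homeomorphic to the open cone $C(\Sigma_p)$, and since $X$ is a topological manifold this cone is an $n$-manifold, hence homeomorphic to $\R^n$. Deleting the cone point and using the deformation retraction of $C(\Sigma_p)\setminus\{\ast\}$ onto $\Sigma_p$ gives $\Sigma_p\simeq \R^n\setminus\{0\}\simeq \sphere^{n-1}$; in particular $\Sigma_p$ is a homotopy $(n-1)$-sphere.

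Now I would run through $n\leq 4$. For $n\leq 2$ the space $\Sigma_p$ is zero- or one-dimensional, hence $\sphere^0$ or a circle, and there is nothing to prove. For $n=3$, $\Sigma_p$ is a closed two-dimensional Alexandrov space with $\curv\geq 1$, so by fact~(2) of the overview it is homeomorphic to $\sphere^2$ or $\RP^2$; being simply connected, it is $\sphere^2$. For $n=4$, $\Sigma_p$ is a closed three-dimensional Alexandrov space that is a homotopy $3$-sphere, so Proposition~\ref{P:GPC} gives $\Sigma_p\cong\sphere^3$. Thus in every case $\Sigma_p$ is homeomorphic to a sphere, and $X^n$ is topologically regular.

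I do not expect a genuine obstacle: the substance lies entirely in the cited three-dimensional input (the classification of closed positively curved surfaces and Proposition~\ref{P:GPC}) together with the elementary observation about cones. The only things to watch are the boundary convention and the trivial very-low-dimensional cases, so that the argument reads uniformly. It is worth noting that the bound $n\leq 4$ is sharp precisely because the $n=4$ step uses Proposition~\ref{P:GPC}: for $n=5$ the space of directions would be a four-dimensional homotopy sphere, and $\Susp(P)$, the suspension of the Poincar\'e homology sphere $P$, shows that such a space need not be a topological manifold.
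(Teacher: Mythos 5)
Your route is essentially correct and genuinely different from the paper's. The paper treats $n=4$ by feeding the classification of closed positively curved Alexandrov $3$-spaces (Theorem~\ref{PC}, via Corollary~\ref{C:3DCLAS}) into the cone-neighborhood picture: a non-simply connected spherical space form is excluded because the cone over it cannot be a $4$-ball, and $\Susp(\RP^2)$ is excluded by comparing $H_k(U,U-p)$ with $H_k(D^4,\sphere^3)$. You instead extract from the manifold hypothesis alone that every space of directions is a homotopy sphere and then quote Proposition~\ref{P:GPC}; this bypasses Theorem~\ref{PC} entirely and is not circular, since that proposition is proved by a Mayer--Vietoris argument plus Perelman.

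The one step that does not hold up as written is ``since $X$ is a topological manifold this cone is an $n$-manifold, hence homeomorphic to $\R^n$.'' Contractibility of an open $n$-manifold does not give $\R^n$ (the Whitehead manifold), and although it is true that an open cone which is a manifold is Euclidean, every proof of that fact starts by showing the link is a simply connected homology sphere --- exactly the information you want to extract, so as stated the step presupposes its own content. The repair is standard: (i) since $X$ is a manifold, $H_k(U,U\setminus\{p\})\cong H_k(\R^n,\R^n\setminus\{0\})$, while the cone structure and the deformation retraction of $U\setminus\{p\}$ onto $\Sigma_p$ give $H_k(U,U\setminus\{p\})\cong \tilde H_{k-1}(\Sigma_p)$, so $\Sigma_p$ is a homology $(n-1)$-sphere; (ii) for $n\geq 3$, subcones form a neighborhood basis at $p$, so a small cone minus its vertex includes into a punctured Euclidean chart and then into $U\setminus\{p\}$, and since both punctured cones retract to $\Sigma_p$ compatibly, the identity of $\pi_1(\Sigma_p)$ factors through the trivial group $\pi_1$ of the punctured ball; hence $\Sigma_p$ is simply connected, and a simply connected homology sphere is a homotopy sphere. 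With this fix your $n\leq 3$ cases and your $n=4$ case (Proposition~\ref{P:GPC}, or the remark following it, which needs only ``simply connected homology sphere'') go through, and your closing observation on sharpness at $n=5$ via $\Susp(P)$ agrees with the paper's discussion preceding the corollary.
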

\begin{proof} If $n\leq 3$, the conclusion follows from the fact that every $1$- or $2$-dimensional Alexandrov space must be homeomorphic to a topological manifold. Suppose now that  $n=4$. Recall that any sufficiently small neighborhood $U$ of $p$ is homeomorphic to the cone over the space of directions $\Sigma_pX$ at $p$. Since a cone over a non-simply connected  $3$-manifold cannot be homeomorphic to the $4$-ball $D^4$, the only  case we need to consider is when $\Sigma_pX$ is homeomorphic to $\Susp(\RP^2)$. In this case, to conclude that $X$ cannot be a topological manifold, it suffices to verify that some homology group $H_k(U,U-p)$ is not isomorphic to $H_k(D^4,\sphere^3)$.  This follows easily from the long  exact sequence of the pair $(U,U-p)$.
\end{proof}


\section{Alexandrov $3$-spaces of nonnegative curvature}
\label{S:NNC}

\subsection*{Proof of Theorem~\ref{T:NNC}} Let $X$ be a three-dimensional Alexandrov space with nonnegative curvature. We will consider two possibilities, depending on whether $X$ is or not a topological manifold. 
Suppose first that $X$ is a topological manifold. We consider two subcases, depending on whether the fundamental group $\pi_1(X)$ is finite or not.

\begin{enumerate}
\item If $\pi_1(X)$ is a finite group, then the universal cover $\tildeX$ is a closed, simply connected topological manifold. Therefore, $\tildeX$ is homeomorphic to the $3$-sphere and $X$ will be homeomorphic to  a spherical $3$-manifold.

\item If $\pi_1(X)$ is infinite, the Splitting Theorem implies that $\tildeX$ is isometric to a product $\RR\times\tilde{Y}$, where $\tilde{Y}$ is a simply connected $2$-dimensional Alexandrov space with nonnegative curvature. The only possibilities for $\tilde{Y}$ are $\sphere^2$ or $\RR^2$. 

\begin{enumerate} 
\item If $\tilde{Y}$ is a topological $2$-sphere,  then $X$ is  covered by $\RR\times \sphere^2$, and consequently will be homeomorphic to either $\sphere^2\times\sphere^1$ or $\RP^3\#\RP^3$ (if orientable) or to $\sphere^1\times\RP^2$ or $\sphere^2\tilde{\times}\sphere^1$, the nonorientable $\sphere^2$-bundle over $\sphere^1$ (if nonorientable) (see \cite{To}).

\item If $\tilde{Y}$ is homeomorphic to $\RR^2$, then its metric has a compact quotient by isometries (since we are assuming that $X$ is closed). A second application of the Splitting Theorem yields that $\tilde{Y}$ must be isometric to Euclidean two-dimensional space $\mathbb{E}^2$. It follows that  $\tildeX$ isometric to $\E^3$. Hence $X$ must be isometric to one of the closed flat $3$-manifolds appearing in \cite{Wo}, pages 117 and 120 for the orientable and nonorientable cases, respectively.
\end{enumerate}
\end{enumerate}
Assume now that $X$ is an Alexandrov space with a finite number of topological singularities. As mentioned in the introduction, there  exists a double branched cover $\iota: M\to X$, where $M$ is an orientable topological $3$-manifold, and by results of \cite{GW}, $M$ with the induced metric is an Alexandrov space with nonnegative curvature and $\iota$ is an isometric orientation reversing involution whose fixed points are the branching points of the double branched cover, so that $X$ is isometric to $M/\iota$.

Therefore, we obtain all  possible spaces $X$ by considering orientation reversing isometric involutions with isolated fixed points on closed, orientable Alexandrov $3$-manifolds $M$ with nonnegative curvature; we have already determined the possibilities for $M$ in the first part of the proof.

\begin{enumerate}
\item The case where $M$ is elliptic has been covered in Theorem~\ref{PC}, thus obtaining that $X$ is homeomorphic to the suspension of $\RP^2$. 

\item When $M$ is a quotient of $\sphere^2\times\RR$, it must be homeomorphic to either $\sphere^2\times\sphere^1$ or $\RP^3\#\RP^3$. We have the following possibilities. 


\begin{enumerate}
\item Involutions on $\sphere^2\times\sphere^1$ were considered in \cite{To2}.  There is only one with a finite fixed point set: It is the product of $C:\sphere^1\to\sphere^1$ given by reflection along a diameter, and of 
$A:\sphere^2\to\sphere^2$ with formula $A(x,y,z)=(x,-y,-z)$. It has $4$ fixed points; the quotient of $\sphere^2\times\sphere^1$ is homeomorphic to the union of two mapping cylinders of the quotient map $\sphere^2\to\RP^2$ or, equivalently, to $\Susp(\RP^2)\#\Susp(\RP^2)$.
\item There are four possible involutions on  $\RP^3\#\RP^3$ reverting orientation (see \cite{PKK}, page 472 for the right references). By work of Kim and Tollefson \cite{KT}, such an involution is either the obvious one, which exchanges the summands and fixes a $2$-sphere, or can be exhibited as a connected sum  $\iota_1\#\iota_2$ of involutions $\iota_i$ on each $\RP^3$ summand. This connected sum of involutions is taken along an appropriate ball intersecting a fixed point set component of $\iota_i$. By work of Kwun \cite{Kw}, there is exactly one orientation reversing involution on $\RP^3$, namely, the one induced by reflection along an equator of $\sphere^3$. Since this involution on $\RP^3$ has fixed point set a $2$-sphere and an isolated point, it follows that the fixed point set of any orientation reversing involution $\iota_1\#\iota_2$ on $\RP^3\#\RP^3$ has a two-dimensional component. Therefore, $\RP^3\#\RP^3$ cannot be the double branched cover $M$ of $X$. 
\end{enumerate}

\item Finally, when $M$ is flat, we are in the last situation mentioned in the theorem, and the statement follows because we had that $X$ is $M$ quotiented by an involution.

\end{enumerate}

Case (3) can be studied further: since the universal cover of $M$ is $\E^3$, it is clear that any isometric involution $a:M\to M$ will lift to an involution $A$ of $\E^3$. Also, if we assume that $a$ has an isolated fixed point, we can assume that the origin $0$ is in its fiber, and that $A$ fixes it. Thus, because the differential of $a$ at $p$ is minus the identity, we have that $A(u)=-u$ for every $u\in\E^3$. The fundamental group of $M$ has an explicit description (see \cite{Wo}, for instance), so the fundamental group of $X$ will be generated by $\pi_1(M)$ and $A$. 
It is possible to  describe $X$ topologically (see \cite{KwTo,LuSj}) but we will not do it here for concision's sake. \hfill $\square$


\section{The Poincar\'e Conjectures for Alexandrov $3$-spaces}
\label{S:PCONJ}

The usual three-dimensional Poincar\'e Conjecture asserts that a closed, simply connected three-dimensional manifold must be homeomorphic to the $3$-sphere. This is equivalent to the statement that a homotopy $3$-sphere must be homeomorphic to the $3$-sphere. However, this equivalence is no longer true for Alexandrov $3$-spaces, due to the lack of Poincar\'e duality in the presence of topological singularities. Therefore, in dimension three, the Poincar\'e Conjecture and the Generalized Poincar\'e Conjecture for Alexandrov spaces are no longer necessarily equivalent, as opposed to the manifold case. 

In this section we prove the Generalized Poincar\'e Conjecture for compact Alexandrov $3$-spaces (cf. Proposition~\ref{P:GPC}). We also provide examples of geometric compact simply connected Alexandrov $3$-spaces which are not homeomorphic to the $3$-sphere. These spaces furnish counterexamples to the Poincar\'e Conjecture for compact Alexandrov $3$-spaces in five of the eight Thurston geometries. We also show that such counterexamples do not exist in the remaining three geometries.

\subsection*{Proof of Proposition~\ref{P:GPC}} Suppose that $X$ has topological singularities, otherwise $X$ is a topological $3$-manifold and the result follows from Perelman's solution to the Poincar\'e Conjecture. Applying the Mayer-Vietoris sequence to the decomposition $X=X_0\cup \cup_{i=1}^{2k}C_0(\RP^2)$, we obtain that $H_3(X,\Z)=0$, which contradicts the assumption that $X$ is a homotopy $3$-sphere.\hfill $\square$ 

\subsection*{Proof of Theorem~\ref{T:SCAS}} We now provide examples of geometric compact simply connected Alexandrov $3$-spaces which are not homeomorphic to the $3$-sphere, for five of the eight Thurston geometries. These spaces arise as quotients of compact geometric $3$-manifolds by the action of an orientation reversing isometric involution with isolated fixed points. We also show that such spaces cannot exist for the geometries $\sol$, $\nil$ and $\sol$ by reasons we will explain below. 

\subsubsection*{$\sphere^3$} The quotient of the spherical suspension of the antipodal map on the round $2$-sphere of radius one yields an isometric involution on the round $3$-sphere with quotient isometric to $\Susp(\RP^2)$,  the spherical suspension of a round $\RP^2$.  

\subsubsection*{$\EE^3$} Let $T^3$ be a flat torus and let $\iota:T^3\rightarrow T^3$ be given by complex conjugation on each $\sphere^1$ factor. This involution has eight isolated fixed points and its quotient space $T^3/\iota$ is flat away from them. 
To see that $T^3/\iota$ is simply connected, write it as the union of two copies of the mapping cylinder of the quotient map $T^2\to \sphere^2$ induced by the involution determined by complex conjugation on each circle factor of $T^2$. After removing a small conical neighborhood around each one of the eight topologically singular points in the quotient space $T^3/\iota$, we obtain a non-orientable $3$-manifold with eight $\RP^2$ boundary components; this space was named \emph{the octopod} in \cite{LAH}.

\subsubsection*{$\HH^3$} This example is an application of the following two results of Panov and Petrunin: 

\begin{thm}[Theorem 1.4 \cite{PP}]
\label{T:PP}Given a finitely presented group $G$ there is a finite index subgroup $\Gamma' \subset \Gamma_{12}$ such that the fundamental group of $\HH^3/\Gamma'$ is isomorphic to $G$. Moreover, the subgroup $\Gamma' \subset \Gamma_{12}$ can be chosen so that the quotient space $\HH^3/\Gamma'$ is a pseudomanifold with no boundary. In other words, the singular points of $\HH^3/\Gamma'$ are modeled on the orientation preserving actions of $\Z_2$ and $\Z_2 \oplus \Z_2$, and on the action of $\Z_2$ by central symmetry.
\end{thm}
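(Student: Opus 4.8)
The final statement is Panov and Petrunin's theorem, so the plan is to recall the essential steps of their \emph{telescopic} construction rather than to reprove it from the results already obtained in this paper. The hyperbolic input is a fixed compact right-angled hyperbolic polytope $P\subset\HH^3$ --- a right-angled dodecahedron, whose twelve faces account for the subscript $12$ --- together with the group generated by reflections in its faces; $\Gamma_{12}$ is the orientation-preserving part of this hyperbolic Coxeter group. First I would encode a finite presentation $G=\langle g_1,\dots,g_k\mid r_1,\dots,r_m\rangle$ in a finite simplicial complex $L$ designed to be compatible with the link combinatorics of $P$. Then I would adapt Davis's reflection-group trick to this situation: from $L$ one builds a right-angled Coxeter group $W_L$, realizes it as a discrete reflection group on $\HH^3$ via a finite-index subgroup $\Gamma'<\Gamma_{12}$ (this is where the right angles, and the fact that $P$ tiles $\HH^3$, are essential), and forms the associated hyperbolic orbifold $\mathcal{O}=\HH^3/\Gamma'$. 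A careful bookkeeping of how the mirrored strata are glued then identifies the fundamental group of the underlying space $|\mathcal{O}|$ with the fundamental group of an auxiliary complex assembled from $L$, which by construction is isomorphic to $G$; crucially this need not be the orbifold fundamental group $\Gamma'$.

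For the local structure one uses that $P$ is right-angled: every nontrivial point stabilizer is generated by reflections in faces meeting pairwise orthogonally, so the orientation-preserving stabilizers that occur are $\Z_2$ (rotation about an edge), $\Z_2\oplus\Z_2$ (two orthogonal rotations at a vertex), and the central symmetry $\Z_2$ at a vertex (the orientation-preserving part of the group generated by three mutually orthogonal reflections). Hence $|\mathcal{O}|$ is a closed pseudomanifold with exactly the three advertised local models. I expect the main obstacle to be carrying out the two requirements \emph{simultaneously}: the complex $L$ must be rich enough to force $\pi_1(|\mathcal{O}|)\cong G$, yet the induced mirror pattern must remain ``generic and right-angled'' enough that no codimension-one singular strata, and no cone points over surfaces of higher genus, appear in the underlying space. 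Controlling this trade-off is the combinatorial heart of the argument.

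To then obtain the $\HH^3$ case of Theorem~\ref{T:SCAS}, I would apply the theorem --- together with the second Panov and Petrunin result quoted alongside it, which ensures that the construction can be arranged to contain at least one central-symmetry singular point --- to the trivial group $G=\{1\}$. The resulting space $X:=\HH^3/\Gamma'$ carries the quotient metric of $\HH^3$, hence is an Alexandrov space with $\curv\geq -1$, and it is geometric in the sense of this paper as the quotient of $\HH^3$ by the cocompact lattice $\Gamma'$; it is simply connected since $\pi_1(X)\cong\{1\}$; and it has a point whose space of directions is homeomorphic to $\RP^2$, so by Perelman's conical neighborhood theorem $X$ is not a topological manifold and, in particular, not homeomorphic to $\sphere^3$.
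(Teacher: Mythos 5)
The paper itself offers no proof of this statement: it is quoted verbatim from Panov--Petrunin \cite{PP} (their Theorem 1.4) and used, together with \cite[Corollary 1.5]{PP}, purely as a black box to produce the $\HH^3$ example in Theorem~\ref{T:SCAS}. So your proposal is really an attempt to reconstruct the argument of \cite{PP}, and as such it has a genuine gap: the step you yourself call ``the combinatorial heart'' --- producing a finite-index $\Gamma'\subset\Gamma_{12}$ with $\pi_1(\HH^3/\Gamma')\cong G$ while keeping all stabilizers in the three listed local models --- is the entire content of the theorem, and you do not supply it. You also do not state the mechanism that makes any such computation of $\pi_1$ of the quotient possible, namely Armstrong's theorem identifying $\pi_1(\HH^3/\Gamma')$ with $\Gamma'$ modulo the normal subgroup generated by elements having fixed points; it is this identification (exploited in \cite{PP} via their notion of a \emph{telescopic} action and a coloring-type construction of $\Gamma'$), rather than a Davis-complex bookkeeping of mirrored strata, that controls the fundamental group of the underlying space.

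There is also a concrete error in your local analysis. $\Gamma_{12}$ is the full Coxeter group generated by the reflections in the faces of the compact right-angled hyperbolic dodecahedron, not its orientation-preserving part, and the central symmetry at a vertex is \emph{not} ``the orientation-preserving part of the group generated by three mutually orthogonal reflections'': that orientation-preserving part is the rotation group $\Z_2\oplus\Z_2$, whereas the central symmetry $-\mathrm{id}$ is orientation-\emph{reversing} in dimension three (the theorem's wording lists it separately from the orientation-preserving $\Z_2$ and $\Z_2\oplus\Z_2$ models for exactly this reason, and its very presence among the stabilizers of $\Gamma'$ shows $\Gamma_{12}$ cannot be orientation-preserving). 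This is not cosmetic: if $\Gamma'$ consisted only of orientation-preserving isometries, every space of directions of $\HH^3/\Gamma'$ would be a quotient $\sphere^2/\Lambda$ with $\Lambda\subset\SO(3)$, hence homeomorphic to $\sphere^2$, and the quotient would be a topological manifold. It is precisely the orientation-reversing central symmetries in $\Gamma'$ that create the isolated points with space of directions $\RP^2$, which is what the application in your final paragraph (taking $G$ trivial, exactly as the paper does for Theorem~\ref{T:SCAS}) requires; that closing paragraph is correct and matches the paper's use of the theorem.
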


\begin{cor}[Corollary 1.5 \cite{PP}] Any finitely presented group $G$ is isomorphic to the fundamental group of $M/\Z_2$, where $M$ is a closed oriented three-dimensional manifold and the action of $\Z_2$ on  $M$ has only isolated fixed points.
\end{cor}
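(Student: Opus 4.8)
The plan is to deduce the statement from Theorem~\ref{T:PP} by recognizing the underlying topological space of the orbifold it produces as a space to which Lemma~\ref{L:BR_COV} applies. First I would apply Theorem~\ref{T:PP} to the given finitely presented group $G$, obtaining a finite index subgroup $\Gamma'\subset\Gamma_{12}$ such that $X:=\HH^3/\Gamma'$ is a closed pseudomanifold without boundary, $\pi_1(X)\cong G$, and every singular point of $X$ is modelled on one of: (i) $\R^3$ modulo a rotation of order two; (ii) $\R^3$ modulo the Klein four-group generated by two coordinate half-turns; (iii) $\R^3$ modulo $\{\pm\mathrm{Id}\}$. Being the quotient of $\HH^3$ by a discrete group of isometries with compact quotient, $X$ is a closed three-dimensional Alexandrov space (with $\curv\ge -1$), so Lemma~\ref{L:BR_COV} will be available.

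The key observation is then that, topologically, $X$ is a manifold away from the finitely many points of type (iii). Indeed, a planar cone of angle $\pi$ is homeomorphic to $\R^2$, so $\R^3$ modulo an order-two rotation is homeomorphic to $\R^3$; likewise the quotient of $\sphere^2$ by the Klein four-group of half-turns is a $2$-sphere (with three order-two cone points, which are topologically invisible), so the model in (ii) is also homeomorphic to $\R^3$. Hence the singularities of types (i) and (ii) are not topological singularities of $X$, whereas $\R^3/\{\pm\mathrm{Id}\}$ is the cone $C_0(\RP^2)$, which is not a manifold point. Thus the set of points of $X$ whose space of directions is not a $2$-sphere is finite, and at each such point the space of directions is $\RP^2$.

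Now two cases arise. If $X$ is not a topological manifold, Lemma~\ref{L:BR_COV} applies directly and yields a closed orientable $3$-manifold $M$ together with an orientation-reversing involution $\iota\colon M\to M$ with only isolated fixed points such that $X$ is homeomorphic to $M/\iota$; since $\pi_1(M/\iota)=\pi_1(X)\cong G$, setting $\Z_2=\langle\iota\rangle$ completes the proof. If instead $X$ is already a topological manifold, then it is a closed orientable $3$-manifold with $\pi_1(X)\cong G$, and I would replace it by the connected sum $X\#\Susp(\RP^2)$, formed away from the conical point of $\Susp(\RP^2)$: this leaves the fundamental group unchanged, because $\Susp(\RP^2)$ is simply connected, and introduces exactly one point with space of directions $\RP^2$, reducing to the previous case.

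I expect the point requiring the most care to be the topological identification in the second paragraph --- that cone-angle-$\pi$ edges and $\Z_2\oplus\Z_2$ vertices contribute no topological singularity --- since this is precisely where the explicit list of local models furnished by Theorem~\ref{T:PP} is used; should one instead read Theorem~\ref{T:PP} as computing an orbifold fundamental group, the same local analysis shows that the edge meridians and vertex loops become trivial upon passing to the underlying space only after the additional relations coming from these models are imposed, which is again exactly what that list provides. Everything afterwards is a formal application of Lemma~\ref{L:BR_COV} together with the fact that attaching a simply connected conical cap does not affect the fundamental group.
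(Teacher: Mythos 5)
The paper itself offers no proof of this statement: it is imported verbatim from \cite{PP}, where it is deduced from Theorem~\ref{T:PP} by an argument that amounts to taking the oriented double branched cover of $\HH^3/\Gamma'$ (equivalently, the quotient of $\HH^3$ by the orientation-preserving index-two subgroup of $\Gamma'$). Your derivation is correct in substance and reconstructs essentially that argument using the paper's Lemma~\ref{L:BR_COV}: the decisive step is your local analysis --- quotients of $\R^3$ by an order-two rotation or by the Klein four-group of half-turns are topological manifold points, while $\R^3/\{\pm\mathrm{Id}\}\cong C_0(\RP^2)$ is not --- after which $X=\HH^3/\Gamma'$ is a closed Alexandrov space whose topologically singular points are finitely many $C_0(\RP^2)$-points, Lemma~\ref{L:BR_COV} produces $M$ and $\iota$ with $X\cong M/\iota$, and $\pi_1(M/\Z_2)\cong\pi_1(X)\cong G$. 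It is also to your credit that you treat the possibility that no central-symmetry points occur, which the quoted statement does not obviously exclude. Two small repairs are needed in that fallback case: the sum $X\#\Susp(\RP^2)$ acquires two, not one, points with space of directions $\RP^2$ (the suspension has two cone points, and indeed the number of such points is always even); and $X\#\Susp(\RP^2)$ is not known to carry an Alexandrov metric, so Lemma~\ref{L:BR_COV} does not literally apply --- either observe that its proof is purely topological and uses only the local cone structure at the singular points, or construct the cover directly as the equivariant connected sum $X\#\sphere^3\#\overline{X}$, with $\Z_2$ acting on the $\sphere^3$ summand as the suspension of the antipodal map and interchanging the two copies of $X$; its quotient is $X\#\Susp(\RP^2)$, which still has fundamental group $G$ because $\Susp(\RP^2)$ minus a ball is simply connected. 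Finally, the hedge in your last paragraph is unnecessary: ``fundamental group'' in Theorem~\ref{T:PP} must mean that of the underlying topological space (the orbifold fundamental group is $\Gamma'$ itself, a residually finite lattice, hence never an arbitrary finitely presented group), and that is the reading your main argument already uses.
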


Taking $G$ as the trivial group yields the desired example. 

\subsubsection*{$\sphere^2\times\RR$} There is an involution on $\sphere^2\times\sphere^1$ that acts as the antipodal map on $\sphere^2$ and as conjugation on $\sphere^1$; it is well known that its quotient space is homeomorphic to $\Susp(\RP^2)\#\Susp(\RP^2)$.

\subsubsection*{$\HH^2\times\RR$} The example given for $\EE^3$ can be adapted to this geometry; to do this, recall that a \emph{hyperelliptic involution} of a compact orientable surface $\Sigma$ of genus $g$ is an involution $h:\Sigma\to\Sigma$ whose 
quotient is the 2-sphere.  When $\Sigma$ receives a hyperbolic metric, $h$ can be realized as an isometry. As in the euclidean case,  we can construct the quotient of 
 $\Sigma\times\sphere^1$ by the involution $(h,\tau)$, where $\tau$ acts by conjugation on $\sphere^1$. As previously said, the quotient space can be written as the union of two copies of the mapping cylinder of $h:\Sigma\to \sphere^2$, thus being simply connected.
 
\subsubsection*{$\SL$ and $\nil$}  It is known that  these spaces are \emph{chiral}, i.e.~any isometric involution of  $\SL$ and $\nil$ preserves the orientation (see \cite{Bo}, section 2.4, for instance). If a simply connected three-dimensional Alexandrov space $X$ admits any of these geometries and is not a topological $3$-sphere, it would need to have a discrete set of points whose space of directions are $\pdosr$. The double branched cover of $X$ would be a manifold $M^3$ with  $\SL$ or $\nil$ geometry and an isometric involution $\tau$ such that $M^3/\tau\simeq X$. The fixed points of $\tau$ are mapped to the topologically singular points of $X$ under the quotient map, and therefore $\tau$ would be forced to revert the orientation. By lifting $\tau$ to an isometry of $\SL$ or $\nil$ we would get a contradiction to chirality.

\subsubsection*{$\sol$} As already stated, we cannot put this geometry on a closed, simply connected Alexandrov $3$-space with topological singularities; however the proof differs from the one for $\SL$ and $\nil$ since, in contrast to those cases, $\sol$ does admit isometric involutions reverting its orientation. In fact, we will show directly that there are no orientation reversing involutions with fixed points on compact $\sol$-manifolds. 

Recall that a closed $\sol$ $3$-manifold must be homeomorphic to one of the following:

\smallskip
\begin{enumerate}
\item A torus bundle over $\sphere^1$ with a hyperbolic gluing map, or
\item A union of two twisted $I$-bundles over the Klein bottle  glued by some map $\Phi:\torus\to\torus$ of the boundaries; these spaces are usually known  in the literature as \emph{sapphires} (cf.~\cite{SWW}).
\end{enumerate}

The first case is dealt with using Corollary~$1$, page 102 in \cite{KS}: no such bundles admit orientation reversing involutions with fixed points and thus cannot produce three-dimensional Alexandrov spaces that are not topological manifolds. 

The second case is a little more involved. Denote by $N$ such a sapphire and by $f:N\to N$ an isometric involution that reverts the orientation and with nonempty fixed point set $F$. There is a double cover $\pi:M\to N$, where $M$ is a torus bundle over $\sphere^1$ as before (see for instance Proposition~3.2 in \cite{GoWo}). If we denote by $L$ the subgroup of $\pi_1(N)$ corresponding to $M$, Theorem~3.3 in \cite{GoWo} asserts that $L$ is invariant by any endomorphism of $\pi_1(N)$. This implies that $f:N\to N$ lifts to an orientation reversing isometric involution $\bar{f}:M\to M$. If $p\in F$ is a fixed point of $f$, then $\bar{f}$ interchanges the points in $\pi^{-1}(p)$, since $\bar{f}$ does not admit fixed points. Since $\bar{f}$ have to preserve the fibers of $M$ over $S^1$ (this follows, for instance, from the arguments given in Theorem~8.2 in \cite{MS}), $\bar{f}$ would have to leave the fiber through $\pi^{-1}(p)$ invariant. However, according to Theorem~B in \cite{KS}, $\bar{f}$ is conjugate to an 
involution of $M$ that moves away any fiber from itself, thus giving the desired contradiction. \hfill $\square$

\begin{rem}
The complete topological description of closed, simply connected three-dimensional Alexandrov spaces seems out of reach at this time. A tempting conjecture would have been to assert that any such space is the connected sum of $\sphere^3$ and suspensions over projective planes. However, the following argument (indicated to us by B. Wilking) shows this not to be the case in general. The spherical metric in $\Susp(\RP^2)$ has positive scalar curvature away from the singular points; by a result of Gromov and Lawson \cite{GL} the connected sum of any number of such suspensions admits a metric with positive scalar curvature that remains an orbifold metric, since it does not differ from the spherical metric around the singular points. Given a homeomorphism between the above connected sum and the previous flat example, we would get a homeomorphism between their oriented branched coverings, and with the pullback metrics we would obtain a positive scalar curvature metric on the $3$-torus, contradicting results in \cite{SY}.
\end{rem}


\section{Geometrization of three-dimensional Alexandrov spaces}
\label{S:GEO}

\begin{proof}[Proof of Theorem \ref{T:GEO}]
Let $X$ be a closed three-dimensional Alexandrov space. We assume that $X$ is not a topological manifold; otherwise, the theorem follows from Perelman's proof of Thurston's Geometrization Conjecture. Let $M$ be the orientable double branched cover of $X$. Recall that $M$ is a closed orientable topological manifold equipped with an  involution $\iota: M\rightarrow M$ such that $X=M/\iota$. As remarked after Lemma~\ref{L:BR_COV_GEOM}, the involution is locally linear and, by \cite[Corollary~2.2]{KwLe}, the map $\iota$ is equivalent to a smooth involution on $M$ considered as a smooth $3$-manifold. We may therefore consider $X$ as a smooth closed $3$-orbifold. Moreover, we may equip $M$ with an invariant Riemannian metric so that $\iota$ acts isometrically. Therefore, $M$ is a closed Riemannian manifold with an isometric action of $\Z_2$

By the work of Dinkelbach and Leeb \cite[Section 5]{DL}, there exists an equivariant Ricci flow $\Z_2\action \mathcal{M}$ with surgery   on $M$.  It is known (cf. \cite[Section 67]{KL}) that each  connected component $N_i$ of the time slab $M_{k-1}$ that goes extinct at the singular time $t_k$ is diffeomorphic to a spherical space form, to $\RP^3\#\RP^3$ or to $\sphere^2\times \sphere^1$. There are two possibilities: either $N_i$ is invariant under the $\Z_2$ action, or the $\Z_2$ action sends $N_i$ to a different connected component $N_i'$. In the first case, by \cite[Corollary~4.5]{Di}, $N_i$ is an equivariant connected sum of standard actions on components diffeomorphic to spherical space forms, $\sphere^2 \times \sphere^1$ and $\RP^3\#\RP^3$. In the second case, $N_i$ maps to a geometric component of the Alexandrov space $X$.

 After a sufficiently long time, so that every component that goes extinct in finite time has disappeared, we obtain a thick-thin decomposition of $M_k$, for $k$ large enough (see, for example, \cite{BBMBP}). Since the definition of the thick-thin decomposition depends entirely on the metric, the decomposition is preserved by the $\Z_2$ action.
 
The thick part flows in the limit to a hyperbolic metric and the $\Z_2$-action on such manifold is standard by \cite[Theorem~H]{DL}. On the other hand, $M_\thin$ is a graph manifold; Waldhausen proved that such manifolds can be written as the connected sums of submanifolds $N_i$ such that the Jaco-Shalen-Johannson decomposition of $N_i$ results in Seifert pieces, possibly with toroidal boundary components.  By work of Meeks and Yau \cite{MY} (see also \cite{JR}), the connected sum decomposition can be taken to be invariant under the $\Z_2$ action. Recall that each Seifert manifold is geometric (see \cite[Section 4]{Sc}), and, if left invariant by $\Z_2$, the action is standard. We distinguish two cases. 
 \\
 
 \noindent\textbf{Case 1.} Suppose that some $N_i$ has a non-trivial JSJ-decomposition. Then $N_i$ is Haken, since the JSJ tori are incompressible. If $N_i$ is not a torus bundle over a circle, it follows from a result of Meeks and Scott \cite{MS} that its JSJ-decomposition can be done equivariantly with respect to the $\Z_2$-action; therefore the Seifert decomposition is preserved by the $\Z_2$ action. If $N_i$ is a torus bundle over a circle, then it already admits a geometric structure, even though the standard JSJ decomposition usually cuts it along a torus \cite{Ne}.
 \\
 
 \noindent\textbf{Case 2.} Every $N_i$ has a trivial JSJ-decomposition. Then each $N_i$ is a Seifert manifold and $M_\thin$ is a connected sum of Seifert manifolds and each $N_i$ is geometric.
 
\end{proof}

\bibliographystyle{amsplain}


\end{document}